\newcommand{\be}{\begin{equation}}
\newcommand{\ee}{\end{equation}}
\newcommand{\bea}{\begin{eqnarray}}
\newcommand{\eea}{\end{eqnarray}}
\newcommand{\beas}{\begin{eqnarray*}}
\newcommand{\eeas}{\end{eqnarray*}}
\newcommand{\bbE}{\mathbb E}
\newcommand{\bbF}{\mathbb F}
\newcommand{\bbH}{\mathbb H}
\newcommand{\bbM}{\mathbb M}
\newcommand{\bbN}{\mathbb N}
\newcommand{\bbP}{\mathbb P}
\newcommand{\bbR}{\mathbb R}
\newcommand{\bbS}{\mathbb S}
\newcommand{\scB}{\mathcal B}
\newcommand{\scC}{\mathcal C}
\newcommand{\scE}{\mathcal E}
\newcommand{\scF}{\mathcal F}
\newcommand{\scL}{\mathcal L}
\newcommand{\scM}{\mathcal M}
\newcommand{\scP}{\mathcal P}
\newcommand{\scW}{\mathcal W}
\newcommand{\ang}[1]{\ensuremath{ \left \langle #1 \right \rangle }}
\newcommand{\norm}[1]{\ensuremath{\left\| #1 \right\|}}
\newcommand{\abs}[1]{\ensuremath{\left| #1 \right|}}
\DeclareMathOperator*{\esssup}{ess\,sup}
\newcommand{\crl}[1]{\ensuremath{ \left\{ #1 \right\} }}
\newcommand{\edg}[1]{\ensuremath{ \left[ #1 \right] }}
\newcommand{\brak}[1]{\ensuremath{\left( #1 \right)}}
\newtheorem{theorem}{Theorem}[section]
\newtheorem{definition}[theorem]{Definition}
\newtheorem{proposition}[theorem]{Proposition}
\newtheorem{corollary}[theorem]{Corollary}
\newtheorem{lemma}[theorem]{Lemma}
\newtheorem{remark}[theorem]{Remark}
\newtheorem{example}[theorem]{Example}
\newtheorem{examples}[theorem]{Examples}
\newtheorem{foo}[theorem]{Remarks}
\newenvironment{Example}{\begin{example}\rm}{\end{example}}
\newenvironment{Remark}{\begin{remark}\rm}{\end{remark}}
\title{BSE'S, BSDE'S AND FIXED POINT PROBLEMS\footnote{We thank Ramon van Handel,
Ying Hu, Peter Imkeller, Shige Peng, and Frederi Viens for fruitful discussions and helpful comments.}}
\author{
Patrick Cheridito\\
ETH Zurich\\ 8092 Zurich,
Switzerland\\
\and
Kihun Nam\\
Rutgers University\\
Piscataway, NJ 08854, USA
}
\date{August 2016}
\begin{document}
\maketitle
\begin{abstract}
In this paper, we introduce a class of backward stochastic equations (BSEs) that extend 
classical BSDEs and include many interesting examples of generalized BSDEs as well as
semimartingale backward equations. We show that a BSE can be translated 
into a fixed point problem in a space of random vectors. This makes it possible to employ
general fixed point arguments to establish the existence of a solution. For instance, Banach's contraction 
mapping theorem can be used to derive general existence and uniqueness 
results for equations with Lipschitz coefficients, whereas Schauder-type fixed 
point arguments can be applied to non-Lipschitz equations.
The approach works equally well for multidimensional as for one-dimensional 
equations and leads to results in several interesting cases such as equations with 
path-dependent coefficients, anticipating equations, McKean--Vlasov type equations 
and equations with coefficients of superlinear growth.\\[2mm]
{\bf MSC 2010:} 60H10, 47H10\\[2mm]
{\bf Key words:} Backward stochastic equation, backward stochastic 
differential equation, path-dependent coefficients, anticipating equations, 
McKean--Vlasov type equations, coefficients of superlinear growth.\\[2mm]
\end{abstract}

\setcounter{equation}{0}
\section{Introduction}\label{intro}
\label{sec:intro}

In this paper we study backward stochastic equations (BSEs) of the form 
\be \label{bseintro}
Y_t + F_t(Y,M) + M_t = \xi + F_T(Y,M) + M_T.
\ee
For a given maturity $T \in \mathbb{R}_+$, a filtered probability space $(\Omega, {\cal F}, ({\cal F}_t)_{0 \le t \le T}, \bbP)$,
a generator $F$ and a terminal condition $\xi \in L^p({\cal F}_T)^d$, a solution to \eqref{bseintro}
consists of a $d$-dimensional adapted process $Y$ together with a $d$-dimensional
martingale $M$ such that equation \eqref{bseintro} holds for all $t \in [0,T]$. 
If $F(Y,M)$ is a finite variation process, \eqref{bseintro} is a semimartingale 
backward equation, which as a special case, contains the semimartingale 
Bellman equation introduced by Chitashvili (1983); see also Mania and Tevzadze
(2003) and the references therein. In the case where $F$ is of the form
$F_t(Y,M) = \int_0^t f(s,Y,M) ds$, BSE \eqref{bseintro} becomes a
generalized backward 
stochastic differential equation (BSDE),
\be \label{genBSDE}
Y_t = \xi + \int_t^T f(s,Y,M) ds + M_T - M_t,
\ee
in the spirit of Liang et al. (2011). If in addition, the probability space 
carries an $n$-dimensional Brownian motion $W$ and a 
Poisson random measure $N$ on $[0,T] \times (\mathbb{R}^m \setminus \crl{0})$
such that every square-integrable martingale $M$ has a unique representation of the form 
$$
M_t= \int_0^t Z^M_s dW_s + \int_0^t \int_{\mathbb{R}^m \setminus \crl{0}} U^M_s(x) \tilde{N}(ds,dx) + K^M_t
$$
for the compensated Poisson random measure $\tilde{N}$, suitable integrands $Z^M$ and $U^M$,
and a square-integrable martingale $K^M$ strongly orthogonal to $W$ and $\tilde{N}$, 
one can write equations of the form
\be \label{bsdeWN}
Y_t = \xi + \int_t^T f(s,Y,Z^M,U^M) ds + M_T - M_t.
\ee
This generalizes the jump-diffusion extension of Tang and Li (1994) of the classical BSDEs introduced by Pardoux and Peng (1990) in three directions. 
First, in Tang and Li (1994) the filtration is generated by the Brownian motion and 
the Poisson random measure, whereas here it is general; secondly, at any given time,
the driver $f$ in \eqref{bsdeWN} can depend on the whole paths of the processes 
$Y$, $Z^M$, $U^M$ and not only on their current values; and finally, $f$
can be a function of $Y$, $Z^M$, $U^M$ viewed as random elements instead of just
their realizations $Y(\omega)$, $Z^M(\omega)$ and $U^M(\omega)$.
As special cases, \eqref{bsdeWN} contains BSDEs with drivers that depend on the
past or future of $Y$, $Z^M$ and $U^M$, such as the time-delayed BSDEs of 
Delong and Imkeller (2010a, 2010b) or the anticipating BSDEs of Peng and Yang (2009).
It also includes mean-field BSDEs as in Buckdahn et al. (2009), or more generally, 
McKean--Vlasov type BSDEs with coefficients depending on the distributions 
of $Y$, $Z^M$ and $U^M$.

Our approach to proving that a BSE has a solution is to translate it into a fixed 
point problem for a mapping $G : L^p({\cal F}_T)^d \to L^p({\cal
  F}_T)^d$. This makes it possible to apply general fixed point results. For instance,
Banach's contraction mapping theorem can be used to derive general 
existence and uniqueness results for equations with Lipschitz coefficients.
In the non-Lipschitz case one can employ Schauder type fixed point arguments.
This yields results for equations with coefficients of 
superlinear growth, but it requires compactness assumptions. By reducing a 
BSE to a fixed point problem in $L^p({\cal F}_T)^d$, one eliminates the time-dimension. 
But one still has to find compact subsets of $L^p({\cal F}_T)^d$. We do that 
by making use of Sobolev spaces corresponding to infinite-dimensional Gaussian measures.

Our method works equally well for multidimensional as for one-dimensional equations,
and in addition to general results for BSEs, it also yields interesting findings for BSDEs.
For instance, in Section \ref{sec:contr}, we obtain existence and uniqueness results for 
BSDEs with functional drivers depending on the whole processes $Y$ and $M$. 
In general, such results require Lipschitz continuity with a small enough Lipschitz 
constant or, alternatively, a sufficiently short maturity. But in several interesting 
special cases, it is possible to derive the existence of a unique solution for arbitrary 
Lipschitz constant and maturity. 
In Section \ref{sec:compact}, we use compactness and a theorem by Krasnoselskii (1964),
which combines the fixed point results of Banach and Schauder, to derive existence results 
for multidimensional BSDEs with functional drivers of superlinear growth. For instance, 
Corollary \ref{corcor:Lip} establishes the existence of solutions to BSDEs with 
general path-dependent drivers and Corollary \ref{cor:f1f2}  the existence of a solution 
to a multidimensional mean-field BSDE with driver of quadratic growth. The latter 
complements results by e.g., Tevzadze (2008) and Cheridito and Nam (2015) on 
multidimensional quadratic BSDEs, which are known to not always have 
solutions (see e.g., Peng, 1999, or Frei and dos Reis, 2011).

The structure of the paper is as follows. In Section \ref{sec:fp}, we formally introduce BSEs and 
relate them to fixed point problems in $L^p({\cal F}_T)^d$. 
In Section \ref{sec:contr}, we derive existence and uniqueness results for various BSEs
and BSDEs with general functional Lipschitz coefficients from Banach's contraction mapping 
theorem. In Section \ref{sec:compact}, we provide existence results for different non-Lipschitz 
equations using compactness and Krasnoselskii's fixed point theorem. 

\setcounter{equation}{0}
\section{BSEs and fixed points in $L^p$}
\label{sec:fp}

In this section, we introduce BSEs and show how they can be translated into fixed point
problems in $L^p$-spaces. We fix a finite time horizon $T \in \mathbb{R}_+$ 
and let $(\Omega, {\cal F},\bbF, \bbP)$ be a filtered probability space with a filtration 
$\bbF:=({\cal F}_t)_{t \in [0,T]}$ satisfying the usual conditions. Then all martingales admit a RCLL modification
(i.e., right-continuous with left limits).
By $|.|$ we denote the Euclidean norm on $\mathbb{R}^d$, and for a $d$-dimensional random vector $X$, we define 
$$
\norm{X}_p := (\bbE |X|^p)^{1/p} \mbox{ if } p < \infty \quad \mbox{and} \quad
\norm{X}_{\infty} := \esssup_{\omega\in\Omega}|X|.
$$
For $p \in (1,\infty]$, we set:

\begin{itemize}
\item $L^p({\cal F}_t)^d$: all $d$-dimensional $\scF_t$-measurable random vectors $X$ satisfying
$\norm{X}_p < \infty$

\item $\bbE_t X:=\bbE[X|\scF_t]$

\item 
$\mathbb{S}^p$: all $\mathbb{R}^d$-valued RCLL adapted processes
$(Y_t)_{0 \le t \le T}$ satisfying
$
\|Y\|_{\mathbb{S}^p}: = \norm{\sup_{0 \leq t \le T}|Y_{t}|}_p < \infty
$

\item $\bbS^p_0:$ all $Y \in \bbS^p$ with $Y_0 = 0$

\item 
$\bbM^p_0$: all martingales in $\bbS_0^p$.
\end{itemize}
A BSE is specified by a generator $F : \bbS^p \times \bbM^p_0 \to \bbS^p_0$
and a terminal condition $\xi \in L^p({\cal F}_T)^d$.

\begin{definition}
A solution to the BSE
\be \label{bse}
Y_t + F_t(Y,M) + M_t = \xi + F_T(Y,M) + M_T
\ee
consists of a pair $(Y,M) \in \bbS^p \times \bbM^p_0$ such that 
\eqref{bse} holds for all $t \in [0,T]$.
\end{definition}

\begin{definition}
We say $F$ satisfies condition {\rm (S)} if for all $y \in L^p({\cal F}_0)^d$ 
and $M \in \bbM^p_0$, the equation
\be \label{sde}
Y_t = y - F_t(Y,M) - M_t 
\ee
has a unique solution $Y \in \bbS^p$. 
\end{definition}

For a given $V \in L^p({\cal F}_T)^d$, one obtains from Jensen's inequality that 
$y^V := \bbE_0 V$ belongs to $L^p({\cal F}_0)^d$ and from Doob's $L^p$-maximal inequality that
$M^V_t := \bbE_0 V - \bbE_t V$ is in $\bbM^p_0$. If $F$ satisfies (S), we denote by $Y^V$ the solution 
of the equation $Y_t = y^V - F_t(Y,M^V) - M^V_t$. 

A BSE depends on the generator $F$ and terminal condition $\xi$.
Provided that $F$ satisfies condition (S), then the pair $(F, \xi)$ also defines a map
$$
G : L^p({\cal F}_T)^d \to L^p({\cal F}_T)^d \quad \mbox{through} \quad V \mapsto \xi + F_T(Y^V,M^V).
$$
To relate solutions of the BSE \eqref{bse} to fixed points of $G$, we define the two mappings
$$
\pi : \bbS^p \times \bbM^p_0 \to L^p({\cal F}_T)^d \quad \mbox{and} \quad
\phi : L^p({\cal F}_T)^d \to \bbS^p \times \bbM^p_0$$ by
$$
\pi(Y,M) := Y_0 - M_T \quad \mbox{and} \quad \phi(V) := (Y^V, M^V).
$$

\begin{theorem} \label{thm:fix} 
Assume $F$ satisfies {\rm (S)}. Then the following hold:
\begin{itemize}
\item[{\rm a)}]
$V = (\pi \circ \phi)(V)$ for all $V \in L^p({\cal F}_T)^d$. In particular, $\phi$ is injective.

\item[{\rm b)}]
If $V \in L^p({\cal F}_T)^d$ is a fixed point of $G$, then
$\phi(V)$ is a solution of the BSE \eqref{bse}.

\item[{\rm c)}]
If $(Y,M) \in \bbS^p \times \bbM^p_0$ solves 
the BSE \eqref{bse}, then $\pi(Y,M)$ is a fixed point of $G$ and \linebreak $(Y,M) = (\phi \circ \pi)(Y,M)$.

\item[{\rm d)}] 
$V$ is a unique fixed point of $G$ in $L^p({\cal F}_T)^d$ if and only if
$\phi(V)$ is a unique solution of the BSE \eqref{bse} in $\bbS^p \times \bbM^p_0$.
\end{itemize}
\end{theorem}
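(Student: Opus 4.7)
My plan is to exploit three facts that are built into the setup: $F_0(Y,M) = 0$ (because $F$ takes values in $\bbS^p_0$), $M_0 = 0$ for every $M \in \bbM^p_0$, and the martingale property of $M$. The first two let me read off initial values by evaluating \eqref{sde} or \eqref{bse} at $t = 0$; the third lets me recover $M$ from conditional expectations of $V := Y_0 - M_T$. Together these reduce the theorem to short bookkeeping calculations.

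For part (a) I would simply unwind the definitions. By construction $\phi(V) = (Y^V, M^V)$ with $y^V = \bbE_0 V$ and $M^V_t = \bbE_0 V - \bbE_t V$, so $M^V_T = \bbE_0 V - V$. Evaluating \eqref{sde} at $t = 0$ gives $Y^V_0 = y^V = \bbE_0 V$, and hence $\pi \circ \phi(V) = Y^V_0 - M^V_T = V$. Injectivity of $\phi$ is then immediate, since $\pi$ is a left inverse.

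For part (b) I would suppose $G(V) = V$ and set $(Y, M) := \phi(V)$. The SDE \eqref{sde} gives the identity $Y_t + F_t(Y,M) + M_t \equiv y^V = \bbE_0 V$, i.e., the left-hand side of \eqref{bse} is constant in $t$. For the right-hand side, $\xi + F_T(Y,M) + M_T = \xi + (G(V) - \xi) + (\bbE_0 V - V) = \bbE_0 V$, so \eqref{bse} holds at every $t$.

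For part (c), let $(Y,M)$ solve \eqref{bse}. Evaluating at $t = 0$ yields $Y_0 = \xi + F_T(Y,M) + M_T$, and substituting back turns \eqref{bse} into the SDE $Y_t = Y_0 - F_t(Y,M) - M_t$. Setting $V := \pi(Y,M) = Y_0 - M_T$, the martingale property gives $\bbE_t V = Y_0 - M_t$, whence $M^V_t = M_t$ and $y^V = Y_0$. Condition (S) then forces $Y = Y^V$, so $\phi \circ \pi(Y,M) = (Y,M)$, and $G(V) = \xi + F_T(Y,M) = Y_0 - M_T = V$. Part (d) is a formal consequence of (a) and (c): $\pi$ and $\phi$ restrict to mutually inverse bijections between the fixed-point set of $G$ and the solution set of the BSE, so uniqueness transfers in both directions. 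I do not anticipate any serious obstacle; the only care required is keeping the identifications $y^V = \bbE_0 V = Y_0$ straight at each step.
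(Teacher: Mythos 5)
Your proposal is correct and follows essentially the same route as the paper's proof: part (a) by unwinding the definitions using $F_0=0$ and $M_0=0$, part (b) by observing that both sides of the BSE equal $y^V=\bbE_0 V$ when $V=G(V)$, part (c) by evaluating the BSE at $t=0$, identifying $y^V=Y_0$ and $M^V=M$, and invoking the uniqueness in condition (S), and part (d) by combining the previous parts. No gaps.
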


\begin{proof} a) is straight-forward to check.

b) If $V \in L^p({\cal F}_T)^d$ is a fixed point of $G$, then 
\be \label{beq}
y^V - M^V_T = (\pi \circ \phi)(V) = V =  G(V) = \xi + F_T(Y^V,M^V).
\ee
Since $Y^V$ satisfies $Y^V_t = y^V - F_t(Y^V,M^V) - M^V_t$ for all $t$,
\eqref{beq} is equivalent to 
$$
Y_t^V + F_t(Y^V,M^V) + M^V_t = \xi + F_T (Y^V,M^V) + M^V_T \quad \mbox{for all } t,
$$
which shows that $\phi(V) = (Y^V, M^V)$ solves the BSE \eqref{bse}.

c) Let $(Y,M) \in \bbS^p \times \bbM^p_0$ be a solution of the 
BSE \eqref{bse}. Set $V:=\pi(Y,M)=Y_0-M_T$. Then, $y^V=Y_0$ and $M^V_t=M_t$.
In particular,
\[
Y_t = Y_0  - F_t(Y,M) - M_t=y^V-F_t(Y,M^V)-M^V_t
\] for all $t$. It follows that $(Y,M) = (Y^V,M^V) = \phi(V) =(\phi \circ \pi)(Y,M)$ and 
$$
y^V=Y^V_0 = \xi + F_T (Y^V,M^V) + M^V_T=G(V)+M^V_T.
$$
Since $y^V-M_T^V=V$, this shows that $V = G(V)$.

d) follows from a)--c).
\end{proof}

In the special case, where $F$ does not depend on $Y$, condition (S) 
holds trivially, and it is enough to find a fixed point of the 
mapping $G_0(V) := G(V) - \mathbb{E}_0 G(V)$ in the
subspace $$L^p_0({\cal F}_T)^d := \crl{V \in L^p({\cal F}_T)^d : \mathbb{E}_0 V = 0}.$$

\begin{corollary} \label{cor:fix}
If $F$ does not depend on $Y$, the following hold:
\begin{itemize}
\item[{\rm a)}]
If $V \in L^p_0({\cal F}_T)^d$ is a fixed point of $G_0$, then the processes 
$Y_t := \bbE_0 \xi + \bbE_0 F_T(M) -F_t(M) - M_t$ and 
$M_t := - \bbE_t V$ form a solution of the BSE
\eqref{bse} in $\bbS^p \times \bbM^p_0$.

\item[{\rm b)}]
If $(Y,M) \in \bbS^p \times \bbM^p_0$ solves the BSE \eqref{bse}, then 
$- M_T$ is a fixed point of $G_0$.

\item[{\rm c)}] 
$V$ is a unique fixed point of $G_0$ in $L^p_0({\cal F}_T)^d$ if and only if the pair 
$(Y,M)$ given by $Y_t := \bbE_0 \xi + \bbE_0 F_T(M) -F_t(M) - M_t$ and 
$M_t := - \bbE_t V$ is a unique solution of the BSE
\eqref{bse} in $\bbS^p \times \bbM^p_0$.
\end{itemize}
\end{corollary}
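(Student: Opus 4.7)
The plan is to prove (a) and (b) by direct verification and then combine them to establish the bijection underlying (c). The simplification provided by $Y$-independence is threefold: condition (S) holds automatically because \eqref{sde} becomes the explicit formula $Y_t = y - F_t(M) - M_t$; for any $V \in L^p_0(\scF_T)^d$ one has $y^V = \bbE_0 V = 0$ and $M^V_t = -\bbE_t V$; and $G(V) = \xi + F_T(M^V)$ depends on $V$ only through $M^V$, so
\begin{equation*}
G_0(V) = \xi + F_T(M^V) - \bbE_0 \xi - \bbE_0 F_T(M^V).
\end{equation*}

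For (a), I will take a fixed point $V$ of $G_0$ and set $M := -\bbE_\cdot V$, which lies in $\bbM^p_0$ with $M_T = -V$, and then define $Y$ by the stated formula; membership $Y \in \bbS^p$ is then automatic since each summand lies in $L^p$ or $\bbS^p$. By construction $Y_t + F_t(M) + M_t = \bbE_0 \xi + \bbE_0 F_T(M)$ is independent of $t$, so equating the values at $t$ and $T$ reduces the BSE \eqref{bse} to the identity $\bbE_0 \xi + \bbE_0 F_T(M) = \xi + F_T(M) + M_T$, which is exactly $V = G_0(V)$. For (b), given a solution $(Y, M)$, I set $V := -M_T$; then $V \in L^p_0$ because $\bbE_0 M_T = M_0 = 0$, and $M^V_t = -\bbE_t V = M_t$. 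Evaluating the BSE at $t = 0$ yields $Y_0 = \xi + F_T(M) + M_T$, and taking $\bbE_0$ gives $Y_0 = \bbE_0 \xi + \bbE_0 F_T(M)$; subtracting one from the other produces $V = G_0(V)$.

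For (c), I will check that the constructions in (a) and (b) are mutual inverses. Applying (b) to the output of (a) returns $V$ directly because the associated martingale has terminal value $M_T = -V$. In the other direction, starting from a solution $(Y, M)$ and passing through (b) to obtain $V = -M_T$, the martingale produced by (a) is $-\bbE_\cdot(-M_T) = M$, and the $Y$-process produced by (a) agrees with the original because the BSE forces the explicit representation $Y_t = Y_0 - F_t(M) - M_t$ with $Y_0 = \bbE_0 \xi + \bbE_0 F_T(M)$, as computed above. Uniqueness of fixed points of $G_0$ in $L^p_0$ is thus equivalent to uniqueness of solutions of the specified form. I do not foresee any serious obstacle; the essential point is that the $\scF_0$-measurable component of $V$ plays no role in $G(V)$ when $F$ is $Y$-independent, which is why projecting onto $L^p_0$ and replacing $G$ by $G_0$ suffices.
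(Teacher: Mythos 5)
Your proof is correct, but it takes a more self-contained route than the paper. The paper proves the corollary by reducing it to Theorem \ref{thm:fix}: it observes that $V$ is a fixed point of $G_0$ in $L^p_0(\scF_T)^d$ exactly when $\tilde V := V + \bbE_0 G(V)$ is a fixed point of $G$ in $L^p(\scF_T)^d$ (using $M^{\tilde V}=M^V$, so $G(\tilde V)=G(V)=\tilde V$), and then invokes parts b)--d) of that theorem. You instead verify everything directly from the explicit formulas available when $F$ is $Y$-independent: the constancy of $t\mapsto Y_t+F_t(M)+M_t$ reduces the BSE to the single identity $\bbE_0\xi+\bbE_0 F_T(M)=\xi+F_T(M)+M_T$, which with $M_T=-V$ is precisely $V=G_0(V)$; and for b) you evaluate the BSE at $t=0$ and apply $\bbE_0$, which is the same computation the paper extracts from Theorem \ref{thm:fix}c). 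The paper's proof is shorter because it reuses the machinery of $\pi$ and $\phi$; yours has the advantage of exhibiting the bijection between fixed points of $G_0$ and solutions explicitly, which makes part c) immediate. One small point of wording: in c) you say uniqueness of fixed points is equivalent to uniqueness of ``solutions of the specified form,'' whereas the corollary asserts uniqueness among \emph{all} solutions in $\bbS^p\times\bbM^p_0$; this is harmless because your own mutual-inverse computation (every solution satisfies $Y_t=Y_0-F_t(M)-M_t$ with $Y_0=\bbE_0\xi+\bbE_0F_T(M)$ and $M=-\bbE_\cdot(-M_T)$) shows that every solution is of the specified form, so the two notions of uniqueness coincide — but it is worth stating that explicitly.
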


\begin{proof}
a) If $V = G_0(V)$, then for $\tilde{V} = V + \bbE_0 G(V)$, 
one has $M^{\tilde{V}} = M^V$, and therefore,
$$
\tilde{V} = V + \bbE_0 G(V)= G(V) =\xi+F_T(M^{V}) =\xi+F_T(M^{\tilde{V}})= G(\tilde{V}).
$$
So it follows from Theorem \ref{thm:fix} that the pair $(Y,M)$ given by 
$Y_t := \bbE_0 \xi + \bbE_0 F_T(M) - F_t(M) - M_t$ and 
$M_t := - \bbE_t V$ solves the BSE \eqref{bse}.

b) If $(Y,M) \in \bbS^p \times \bbM^p_0$ solves the BSE \eqref{bse}, it follows from 
Theorem \ref{thm:fix} that $V:=Y_0 - M_T$ is a fixed point of $G$. So
\[
G_0(-M_T)=G_0(Y_0-M_T)=G(V)-\bbE_0G(V)=V-\bbE_0V=-M^V_T=-M_T.
\]

c) $V$ is a fixed point of $G_0$ if and only if $V + \bbE_0 G(V)$ is a fixed point 
of $G$. Therefore, the result follows from part d) of Theorem \ref{thm:fix}.
\end{proof}

The following lemma provides a sufficient condition for $F$ to satisfy condition (S).
For $(Y,M) \in \bbS^p \times \bbM^p_0$ and $k \in \bbN$, define
$$
F^{(k)}_t(Y,M) := F_t(Y^{(k,M)},M),
$$
where $Y^{(k,M)}$ is recursively given by
$$
Y^{(1,M)} := Y \quad \mbox{and} \quad Y^{(k,M)}_t := Y_0 - F_t(Y^{(k-1,M)},M) - M_t, \quad 
k \ge 2.
$$

\begin{lemma} \label{lemma:LipY}
If for given $y \in L^p({\cal F}_0)^d$ and $M \in \bbM^p_0$, there 
exist a number $k \in \mathbb{N}$ and a constant $C <1$ such that
\be \label{FkLip}
\norm{F^{(k)}(Y,M)-F^{(k)}(Y',M)}_{\bbS^p} \le C \norm{Y-Y'}_{\bbS^p} \quad \mbox{for all }
Y,Y' \in \bbS^p \mbox{ with } Y_0 = Y'_0 = y,
\ee
then the SDE \eqref{sde} has a unique solution $Y \in \bbS^p$.
\end{lemma}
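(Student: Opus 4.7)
The plan is to interpret the SDE \eqref{sde} as a fixed point problem on the complete metric space
$\mathcal{Y} := \crl{Y \in \bbS^p : Y_0 = y}$, which is a closed affine subspace of $\bbS^p$ (here $Y_0 = y$ is understood $\bbP$-a.s., and recall $y \in L^p({\cal F}_0)^d$). Define
$\Phi : \mathcal{Y} \to \mathcal{Y}$ by
$\Phi(Y)_t := y - F_t(Y,M) - M_t.$
This is well-defined: $F(Y,M) \in \bbS^p_0$ by assumption on the generator and $M \in \bbM^p_0 \subset \bbS^p$, so $\Phi(Y) \in \bbS^p$; and since $F_0(Y,M) = 0$ and $M_0 = 0$, we have $\Phi(Y)_0 = y$, so $\Phi(Y) \in \mathcal{Y}$. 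Clearly $Y \in \mathcal{Y}$ solves the SDE \eqref{sde} if and only if $Y$ is a fixed point of $\Phi$.

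The next step is to identify the iterates of $\Phi$ with the recursion defining $F^{(k)}$. An easy induction on $k$ shows that $\Phi^{k-1}(Y) = Y^{(k,M)}$ for every $k \ge 1$ and every $Y \in \mathcal{Y}$: the case $k = 1$ is trivial, and assuming it for $k-1$, one has
$\Phi^{k-1}(Y)_t = \Phi(\Phi^{k-2}(Y))_t = y - F_t(Y^{(k-1,M)},M) - M_t = Y^{(k,M)}_t$,
using $Y_0 = y$. Consequently,
$\Phi^{k}(Y)_t - \Phi^{k}(Y')_t = \bigl(y - F_t(Y^{(k,M)},M) - M_t\bigr) - \bigl(y - F_t(Y'^{(k,M)},M) - M_t\bigr) = F^{(k)}_t(Y',M) - F^{(k)}_t(Y,M),$
so hypothesis \eqref{FkLip} yields
$\norm{\Phi^{k}(Y) - \Phi^{k}(Y')}_{\bbS^p} \le C \norm{Y - Y'}_{\bbS^p}$
for all $Y,Y' \in \mathcal{Y}$, with $C < 1$.

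Thus $\Phi^k$ is a strict contraction on the complete metric space $\mathcal{Y}$, and the standard iterated version of Banach's fixed point theorem applies: $\Phi^k$ has a unique fixed point $Y^* \in \mathcal{Y}$, and since $\Phi^k(\Phi(Y^*)) = \Phi(\Phi^k(Y^*)) = \Phi(Y^*)$, uniqueness forces $\Phi(Y^*) = Y^*$; any other fixed point of $\Phi$ would also fix $\Phi^k$, so $Y^*$ is the unique fixed point of $\Phi$. Translating back, $Y^*$ is the unique solution in $\bbS^p$ of \eqref{sde}.

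The only nontrivial point is the bookkeeping that $\Phi^{k-1}(Y) = Y^{(k,M)}$, which is what makes the abstract Lipschitz condition on $F^{(k)}$ translate into a genuine contraction estimate for $\Phi^k$; everything else is a direct application of the Banach contraction principle on a closed subset of $\bbS^p$.
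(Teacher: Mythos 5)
Your proposal is correct and follows essentially the same route as the paper: both identify the map $\Phi(Y)=y-F(Y,M)-M$ on $\crl{Y\in\bbS^p: Y_0=y}$, observe that its $k$-th iterate is $Y\mapsto y-F^{(k)}(Y,M)-M$ so that \eqref{FkLip} makes $\Phi^k$ a contraction, and then pass from the unique fixed point of $\Phi^k$ to a unique fixed point of $\Phi$ via the standard commutation argument $\Phi^k(\Phi(Y^*))=\Phi(\Phi^k(Y^*))$. Your explicit induction showing $\Phi^{k-1}(Y)=Y^{(k,M)}$ only makes precise a step the paper leaves implicit.
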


\begin{proof}
The mapping $Y \mapsto y - F^{(k)}(Y,M) - M$ is a contraction on
$\crl{Y \in \mathbb{S}^p : Y_0 = y}$.
So it follows from Banach's contraction mapping theorem that there exists a unique 
$Y \in \bbS^p$ satisfying $Y = y - F^{(k)}(Y,M) - M =
Y^{(k+1,M)}$. This implies 
$$
Y^{(2,M)} = y-F_t(Y,M)-M_t=y-F_t(Y^{(k+1,M)},M)-M_t= Y^{(k+2,M)} = y - F^{(k)}(Y^{(2,M)},M) - M,
$$
from which one deduces $Y = Y^{(2,M)} = y - F(Y,M) - M$. This shows that $Y$ solves 
the SDE \eqref{sde}. If $Y' \in \bbS^p$ is another solution of 
\eqref{sde}, then $Y' = y - F^{(k)}(Y',M) - M$, and one obtains $Y' = Y$. 
\end{proof}

\setcounter{equation}{0}
\section{Existence and uniqueness of solutions under Lipschitz assumptions}
\label{sec:contr}

In this section we consider equations with Lipschitz coefficients and
use Banach's contraction mapping theorem to show that they 
have unique solutions.

\subsection{General existence and uniqueness results}

We start with a result for general Lipschitz BSEs. Let us denote
$$
c_2 = \frac{1}{5}, \quad c_{\infty} = \frac{1}{4} \quad \mbox{and} \quad c_p = \frac{p-1}{4p-1} \quad
\mbox{for } p \in (1,\infty) \setminus \crl{2}.
$$
Then the following holds:

\begin{theorem} \label{thm:contra}
Let $\xi\in L^p(\scF_T)^d$ for some $p \in (1, \infty]$. If there exist a number 
$k \in \mathbb{N}$ and a constant $C < c_p$ such that
\begin{align*}
\norm{F^{(k)}(Y,M)-F^{(k)}(Y',M')}_{\bbS^p}\leq C \brak{\norm{Y-Y'}_{\bbS^p}+\norm{M-M'}_{\bbS^p}}
\quad \mbox{for all} \quad Y,Y' \in \mathbb{S}^p \mbox{ and } M,M' \in \mathbb{M}^p_0,
\end{align*}
then the BSE \eqref{bse} has a unique solution $(Y,M)$ in $\bbS^p \times \bbM^p_0$.
\end{theorem}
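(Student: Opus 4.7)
The plan is to apply Theorem \ref{thm:fix} by producing a unique fixed point of $G$ in $L^p(\scF_T)^d$ via Banach's contraction mapping theorem. Condition (S), and hence the well-definedness of $G$, follows immediately from Lemma \ref{lemma:LipY}: setting $M=M'$ in the hypothesis yields the Lipschitz-in-$Y$ condition with the same constant $C<c_p\le 1$.

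To show $G$ is a strict contraction, fix $V,V'\in L^p(\scF_T)^d$ and write $\hat V:=V-V'$, $\hat Y:=Y^V-Y^{V'}$, $\hat M:=M^V-M^{V'}$, and $\Delta F:=F(Y^V,M^V)-F(Y^{V'},M^{V'})$. The key identity comes from substituting $y^V-M^V_t=\bbE_t V$ into the SDE defining $Y^V$, giving $Y^V_t=\bbE_t V-F_t(Y^V,M^V)$, and therefore
\[
\hat Y_t=\bbE_t\hat V-\Delta F_t \qquad \mbox{and} \qquad G(V)-G(V')=\Delta F_T.
\]
Doob's $L^p$-maximal inequality (Jensen's in the case $p=\infty$) bounds $\norm{\sup_t|\bbE_t\hat V|}_p$ by $D_p\norm{\hat V}_p$ for an explicit $D_p$, giving $\norm{\hat Y}_{\bbS^p}\le D_p\norm{\hat V}_p+\norm{\Delta F}_{\bbS^p}$. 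Since $\hat M_t=\bbE_0\hat V-\bbE_t\hat V$, a parallel argument controls $\norm{\hat M}_{\bbS^p}$ by $\kappa_p\norm{\hat V}_p$.

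Because $Y^V$ is a fixed point of every iterate of the map $Y\mapsto y^V-F(Y,M^V)-M^V$, we have $F^{(k)}(Y^V,M^V)=F(Y^V,M^V)$, so the Lipschitz hypothesis reads $\norm{\Delta F}_{\bbS^p}\le C(\norm{\hat Y}_{\bbS^p}+\norm{\hat M}_{\bbS^p})$. Substituting the two previous bounds and rearranging yields
\[
\norm{G(V)-G(V')}_p \le \norm{\Delta F}_{\bbS^p} \le \frac{C(D_p+\kappa_p)}{1-C}\,\norm{\hat V}_p.
\]
A case analysis then shows that $1/(1+D_p+\kappa_p)$ equals $c_2=1/5$, $c_\infty=1/4$, or $c_p=(p-1)/(4p-1)$ in the respective regimes, so the hypothesis $C<c_p$ is exactly what forces the right-hand constant below one. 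Banach's theorem produces a unique fixed point of $G$ in $L^p(\scF_T)^d$, and Theorem \ref{thm:fix}(d) converts it into the unique solution of the BSE in $\bbS^p\times\bbM^p_0$.

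The main obstacle is extracting the sharp thresholds $c_p$; the piecewise definition signals that nearly tight bounds on $\norm{\hat M}_{\bbS^p}$ are required in the three regimes. In particular, recovering $\kappa_2=2$ (rather than the cruder value $4$ that follows from applying Doob directly to $\hat M$) requires the Hilbert space identity $\norm{\hat V}_2^2=\norm{\bbE_0\hat V}_2^2+\norm{\hat V-\bbE_0\hat V}_2^2$ to first bound $\norm{\hat M_T}_2\le\norm{\hat V}_2$, and this tighter estimate is what produces $c_2=1/5$ instead of $1/7$.
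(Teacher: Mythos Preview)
Your proposal is correct and follows the same approach as the paper: verify condition (S) via Lemma \ref{lemma:LipY}, use the identity $Y^V_t=\bbE_t V-F_t(Y^V,M^V)$ together with $F^{(k)}(Y^V,M^V)=F(Y^V,M^V)$ to bound $\norm{\Delta F}_{\bbS^p}$, apply Doob's maximal inequality to control both $\norm{\sup_t|\bbE_t\hat V|}_p$ and $\norm{\hat M}_{\bbS^p}$, and then check that the resulting contraction constant $C(D_p+\kappa_p)/(1-C)$ is below $1$ precisely when $C<c_p$. The paper organizes the arithmetic slightly differently---it first isolates $\norm{\hat Y}_{\bbS^p}$ and then substitutes---but the estimates and the case split ($\kappa_2=2$ via $L^2$-orthogonality, $\kappa_p=2p/(p-1)$ otherwise, $\kappa_\infty=2$) are identical to yours.
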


\begin{proof}
Since $C < 1$, it follows from Lemma \ref{lemma:LipY} that $F$ satisfies (S). 
So by Theorem \ref{thm:fix}, it is enough to prove that $G$ has a unique fixed 
point in $L^p({\cal F}_T)^d$. This follows from Banach's contraction mapping theorem
if we can show that $G$ is a contraction on $L^p({\cal F}_T)^d$. Since for $V \in L^p(\scF_T)^d$, 
$Y^V$ is the unique fixed point of the mapping $Y \mapsto \bbE_0V-F(Y,M^V)-M^V$, 
it follows from the definition of $F^{(k)}$ that $F(Y^V,M^V)=F^{(k)}(Y^V,M^V)$.
Hence, one has for all $V,V' \in L^p({\cal F}_T)^d$, 
\beas
Y^V_t- Y^{V'}_t &=& y^V - y^{V'} - \crl{F^{(k)}_t(Y^V, M^V) - F^{(k)}_t(Y^{V'}, M^{V'})}
- (M^V_t - M^{V'}_t)\\
&=& \bbE_t(V-V') - \crl{F^{(k)}_t(Y^V, M^V) - F^{(k)}_t(Y^{V'}, M^{V'})}.
\eeas 
Therefore,
\begin{align*}
\sup_{0\leq t\leq T}|Y^V_t- Y^{V'}_t| & \leq \sup_{0 \leq t \leq T}|
\bbE_t(V-V')|+\sup_{0\leq t\leq T}|F^{(k)}_t(Y^V, M^V) - F^{(k)}_t(Y^{V'}, M^{V'})|,
\end{align*}
and it follows that
\begin{align*}
\norm{Y^V - Y^{V'}}_{\bbS^p}& \leq \norm{\sup_{0 \leq t \leq T}|\bbE_t(V-V')|}_p
+\norm{F^{(k)}(Y^V, M^V)-F^{(k)}(Y^{V'},M^{V'})}_{\bbS^p}\\
&\leq\norm{\sup_{0\leq t\leq T}|\bbE_t(V - V')|}_p 
+C \brak{\norm{Y^V - Y^{V'}}_{\bbS^p}+ \norm{M^V - M^{V'}}_{\bbS^p}}.
\end{align*}
In particular,
\begin{align*}
\norm{Y^V - Y^{V'}}_{\bbS^p}& \leq\frac{1}{1-C} \brak{\norm{\sup_{0\leq t\leq T}
|\bbE_t(V-V')|}_p+ C \norm{M^V - M^{V'}}_{\bbS^p}},
\end{align*}
and therefore,
\begin{align*}
& \norm{G(V)-G(V')}_p = \norm{F^{(k)}_T(Y^V,M^V)-F^{(k)}_T(Y^{V'}, M^{V'})}_p
\leq C \brak{\norm{Y^V-Y^{V'}}_{\bbS^p}+\norm{M^V-M^{V'}}_{\bbS^p}}\\
& \le \frac{C}{1-C}\brak{\norm{\sup_{0\leq t\leq T}|\bbE_t(V-V')|}_p
+C \norm{M^V - M^{V'}}_{\bbS^p}} + C \norm{M^V - M^{V'}}_{\bbS^p}\\
&= \frac{C}{1-C}\brak{\norm{\sup_{0\leq t\leq T}|\bbE_t(V-V')|}_p+\norm{M^V - M^{V'}}_{\bbS^p}}.
\end{align*}
By Doob's $L^p$-maximal inequality, if we let $C_{p}=p/(p-1)$ for
$p\in(1,\infty)$ and $C_{\infty}=1$,
$$
\norm{\sup_{0\leq t\leq T}|\bbE_t(V-V') - \bbE_0(V-V')|}_p 
\leq C_{p} \norm{V-V' - \bbE_0(V-V')}_p,
$$
and
$$
\norm{\sup_{0\leq t\leq T}|\bbE_t(V-V')|}_p \leq C_{p}\norm{V-V'}_p.
$$
Hence,
$$
\norm{M^V - M^{V'}}_{\bbS^p} \le
\left\{
\begin{array}{cc}
2 \norm{V-V' - \bbE_0(V-V')}_2
\le 2 \norm{V-V'}_2 & \mbox{ for } p =2\\
C_{p} \norm{V-V' - \bbE_0(V-V')}_p
\leq 2C_{p} \norm{V-V'}_p & \mbox{ for } p \neq 2
\end{array}
\right.,
$$
and
$$
\norm{G(V)-G(V')}_p \le \left\{
\begin{array}{cc}
\frac{4C}{1-C}  \norm{V-V'}_2 & \mbox{ for } p =2\\
3C_p \frac{C}{1-C} \norm{V-V'}_p & \mbox{ for } p \neq 2
\end{array} \right..
$$
This shows that $G$ is a contraction.
\end{proof}

\begin{Remark} \label{rem:counter}
One cannot hope to obtain a general existence and uniqueness result like Theorem \ref{thm:contra} for equations with 
path-dependent coefficients without the assumption that the Lipschitz constant $C$ is sufficiently small. 
For instance, if the generator is given by $F_t(Y,M) = atY_0 $ for a constant $a$, the BSE \eqref{bse} 
takes the form
\be \label{counterex}
Y_t - a(T-t)Y_0 = \xi + M_T - M_t.
\ee
This is a variant of the equation studied in Example 3.1 of Delong and Imkeller (2010a),
who noticed that time-delayed BSDEs with Lipschtitz coefficients are not always well-posed. 
Obviously, $F(Y,M)$ is Lipschitz in $(Y,M)$. But if 
one sets $t=0$ and takes expectation on both sides of \eqref{counterex}, one obtains
$(1- aT)Y_0 = \mathbb{E}_0 \xi$. This shows that for
$aT = 1$ and $\mathbb{E}_0 \xi \neq 0$, \eqref{counterex} cannot have a solution. On the other hand, 
if $aT = 1$ and $\mathbb{E}_0 \xi = 0$ then $Y_t = (1- t/T)Y_0 + \mathbb{E}_t \xi$ and
$M_t = - \mathbb{E}_t \xi$ defines a solution for any initial value $Y_0 \in L^p({\cal F}_0)^d$.
So in this case, \eqref{counterex} has infinitely many solutions in $\bbS^p \times \bbM^p_0$.
\end{Remark}
	
If the generator is of integral form $F_t(Y,M) = \int_0^t f(s,Y,M) ds$ for a driver
\be \label{gendriver}
f :[0,T] \times \Omega \times \bbS^p \times \bbM^p_0 \to \bbR^d,
\ee
the BSE \eqref{bse} becomes a BSDE of the general form
\be \label{BSDEYM}
Y_t = \xi + \int_t^T f(s,Y, M)ds + M_T-M_t.
\ee
If for a RCLL measurable processe $X$, one denotes
$\norm{X}_{\bbS^p_{[0,t]}} := \norm{\sup_{0 \le s \le t} |X_t|}_p$, the following holds:

\begin{proposition} \label{prop:genBSDE}
Let $\xi \in L^p({\cal F}_T)^d$ for some $p \in (1, \infty]$. Then 
the BSDE \eqref{BSDEYM} has a unique solution 
$(Y,M) \in \bbS^p \times \bbM^p_0$ for every driver of the form \eqref{gendriver} 
satisfying the following conditions:
\begin{itemize}
\item[{\rm (i)}] For all $(Y,M) \in \bbS^p \times \bbM^p_0$, $f(\cdot,Y,M)$ is progressively measurable
with $\int_0^T \norm{f(t,0,0)}_p dt < \infty$.
\item[{\rm (ii)}] There exist nonnegative constants
$$
C_1 > 0 \quad \mbox{and} \quad C_2 < \frac{c_pC_1}{e^{C_1T}-1}
$$
such that
\beas
&& \norm{f(t,Y,M)-f(t,Y',M')}_p\\
&\le& C_1 \norm{Y - Y_0 + M - (Y' - Y'_0 + M')}_{\bbS^p_{[0,t]}}
 + C_2 \brak{\norm{Y_0 - Y'_0}_p + \norm{M-M'}_{\bbS^p}}
\eeas
for all $(Y,M),(Y',M') \in \bbS^p \times \bbM^p_0$.
\end{itemize}
\end{proposition}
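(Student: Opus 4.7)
The plan is to realize Proposition \ref{prop:genBSDE} as a special case of Theorem \ref{thm:contra} applied to the generator $F_t(Y,M) := \int_0^t f(s,Y,M)\,ds$. Before invoking the theorem I would check that $F$ maps $\bbS^p \times \bbM^p_0$ into $\bbS^p_0$: one has $F_0 = 0$, and by Minkowski's integral inequality $\norm{F}_{\bbS^p} \le \int_0^T \norm{f(s,Y,M)}_p\,ds$, which is finite by (i) together with the growth bound obtained from (ii) by taking $(Y',M') = (0,0)$.

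The heart of the argument is to iterate the Lipschitz estimate (ii). Let $a_k(t) := \norm{F^{(k)}(Y,M) - F^{(k)}(Y',M')}_{\bbS^p_{[0,t]}}$, and set $b := \norm{Y - Y_0 + M - (Y' - Y'_0 + M')}_{\bbS^p}$, $c := \norm{Y_0 - Y'_0}_p + \norm{M-M'}_{\bbS^p}$. From the recursive definition of $Y^{(k,M)}$ one reads off that $Y^{(k,M)}_0 = Y_0$ and $Y^{(k,M)} - Y_0 + M = -F^{(k-1)}(Y,M)$ for $k \ge 2$. Substituting $(Y^{(k,M)}, M)$ and $(Y'^{(k,M')}, M')$ into (ii) and applying Minkowski's inequality in $L^p$ therefore yields the recursion $a_k(t) \le C_1 \int_0^t a_{k-1}(s)\,ds + C_2 t c$ for $k \ge 2$, with base case $a_1(t) \le C_1 t b + C_2 t c$. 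A routine induction then gives
\begin{align*}
a_k(T) \;\le\; \frac{(C_1 T)^k}{k!}\, b \;+\; \frac{C_2}{C_1}\Bigl(\sum_{j=1}^{k} \frac{(C_1 T)^j}{j!}\Bigr)\, c \;\le\; \frac{(C_1 T)^k}{k!}\, b \;+\; \frac{C_2}{C_1}(e^{C_1 T}-1)\, c.
\end{align*}

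The main obstacle, and the reason the precise bound on $C_2$ in (ii) is chosen the way it is, is squeezing this Lipschitz coefficient strictly below $c_p$. Bounding $b \le 2\norm{Y-Y'}_{\bbS^p} + \norm{M-M'}_{\bbS^p}$ and $c \le \norm{Y-Y'}_{\bbS^p} + \norm{M-M'}_{\bbS^p}$ gives an overall constant of at most $2(C_1 T)^k/k! + \frac{C_2}{C_1}(e^{C_1 T}-1)$. The hypothesis $C_2 < c_p C_1/(e^{C_1 T}-1)$ makes the second term strictly less than $c_p$, so the factorial decay of $(C_1 T)^k/k!$ lets me fix $k$ large enough that the full constant is below $c_p$. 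For such a $k$, specialising to $Y_0 = Y'_0$, $M = M'$ forces $c = 0$ and $b \le \norm{Y - Y'}_{\bbS^p}$, so $F^{(k)}$ is a contraction in $Y$ and Lemma \ref{lemma:LipY} gives condition (S); Theorem \ref{thm:contra} then produces the unique $(Y,M) \in \bbS^p \times \bbM^p_0$ solving \eqref{BSDEYM}. The only routine detail I would double-check is the step $\sup_{s \le t}|F_s - F'_s| \le \int_0^t |f(r,Y,M) - f(r,Y',M')|\,dr$, which uses absolute continuity of $F$ in $t$ before Minkowski's inequality takes over.
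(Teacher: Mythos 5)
Your proposal is correct and follows essentially the same route as the paper: verify that $F_t(Y,M)=\int_0^t f(s,Y,M)\,ds$ maps into $\bbS^p_0$, use the identity $Y^{(k,M)}-Y_0+M=-F^{(k-1)}(Y,M)$ to turn hypothesis (ii) into the Gronwall-type recursion $a_k(t)\le C_1\int_0^t a_{k-1}(s)\,ds+C_2tc$, deduce the bound $\frac{(C_1T)^k}{k!}b+\frac{C_2}{C_1}(e^{C_1T}-1)c$, and let the factorial decay push the Lipschitz constant of $F^{(k)}$ below $c_p$ so that Theorem \ref{thm:contra} applies. The paper's proof is identical in substance (it writes the iteration with $H^k_t$ in place of your $a_k(t)$), so no further comparison is needed.
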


\begin{proof}
Let $q= p/(p-1) \in [1,\infty)$. It follows from the assumptions that for all $(Y,M) \in \bbS^p \times \bbM^p_0$,
\beas
&& \norm{\int_0^T |f(t,Y, M)| dt}_p = \sup_{\norm{X}_q \leq 1}\int_0^T \bbE\edg{|f(t,Y,M)| |X|}dt\\
&\le& \sup_{\norm{X}_q\leq 1}\int_0^T\norm{f(t,Y,M)}_p \norm{X}_q dt = \int_0^T \norm{f(t,Y, M)}_p dt\\
&\le& \int_0^T \norm{f(t,0,0)}_p dt + T C_1
 \norm{Y - Y_0 + M}_{\bbS^p} + T C_2 \brak{\norm{Y_0}_p + \norm{M}_{\bbS^p}} < \infty.
\eeas
So $F_t(Y,M) := \int_0^t f(s,Y,M) ds$ is a well-defined mapping from 
$\bbS^p \times \bbM^p_0$ to $\bbS^p_0$ for all $p\in(1,\infty]$.

For given $Y,Y' \in \bbS^p$ and $M,M' \in \bbM^p_0$, set
\begin{align*}
\delta&:=\frac{C_2}{C_1} \brak{\norm{Y_0-Y'_0}_p + \norm{M-M'}_{\bbS^p}}\\
H^0_t &:= H^0 := 2 \brak{\norm{Y-Y'}_{\bbS^p} + \norm{M-M'}_{\bbS^p}}\\
H^k_t &:=\norm{F^{(k)}(Y,M)-F^{(k)}(Y',M')}_{\bbS^p_{[0,t]}}.
\end{align*}
Then 
\beas
H^k_t &\le& \int_0^t \norm{f(s,Y^{(k,M)}, M)-f(s,(Y')^{(k,M')}, M')}_pds\\
&\le& \int_0^t \brak{C_1 H^{k-1}_s
+C_2 \brak{\norm{Y_0-Y'_0}_p + \norm{M-M'}_{\bbS^p}}} ds\\
&\le& C_1 \int_0^t (H^{k-1}_s + \delta) ds,
\eeas
and by iteration,
\begin{align*}
H^k_t & \leq \frac{(C_1t)^k}{k!} H^0 + \brak{C_1 t + \dots + \frac{(C_1 t)^k}{k!}} \delta.
\end{align*}
In particular,
\beas
&& \norm{F^{(k)}(Y,M)-F^{(k)}(Y',M)}_{\bbS^p}\\
&\le& 2 \frac{(C_1T)^k}{k!} \brak{\norm{Y-Y'}_{\bbS^p} + \norm{M-M'}_{\bbS^p}}
+ \brak{e^{C_1 T} - 1} \frac{C_2}{C_1} \brak{\norm{Y_0-Y'_0}_p + \norm{M-M'}_{\bbS^p}}.
\eeas
So for $k$ large enough, there exists a constant $C < c_p$ such that
$$
\norm{F^{(k)}(Y,M) - F^{(k)}(Y',M')}_{\bbS^p} \le C \brak{\norm{Y-Y'}_{\bbS^p}+ \norm{M-M'}_{\bbS^p}},
$$
and the proposition follows from Theorem \ref{thm:contra}.
\end{proof}

\begin{Remark}
The backward stochastic dynamics
$$
Y_t = \int_t^T f_0(s,Y_s,L(M)_s)ds + \int_t^T f(s,Y_s) dB_s - (M_T-M_t)
$$
studied by Liang et al. (2011) can be viewed as a BSE with generator
$$
F_t(Y,M) = \int_0^t f_0(s,Y_s,L(M)_s)ds + \int_0^t f(s,Y_s) dB_s.
$$
But it also fits into the framework \eqref{BSDEYM} if the transformation
$$
\tilde{M}_t = \int_0^t f(s,Y_s) dB_s - M_t 
\quad \mbox{and} \quad
\tilde{f}(t,Y,\tilde{M}) = f_0\brak{t,Y_t, L\brak{\int f(s,Y_s) dB_s - \tilde{M}}_t}
$$
is applied. In addition, \eqref{BSDEYM} includes BSDEs with drivers depending on the past or future
of the processes $Y$ and $M$, such as the time-delayed BSDEs of 
Delong and Imkeller (2010a, 2010b) or the anticipating BSDEs of Peng and Yang (2009).
Previous existence and uniqueness results like Theorem 3.3 of Liang et al. (2011), 
Theorem 2.1 of Delong and Imkeller (2010a) or Theorem 2.1 of Delong and 
Imkeller (2010b), can all be recovered as special cases of Proposition \ref{prop:genBSDE}.
\end{Remark}

\begin{Remark} \label{rmkpath}
Let $f : [0,T] \times \Omega \times \bbS^p \times \bbM^p_0 \to\bbR^d$ be a driver
satisfying condition (i) of Proposition \ref{prop:genBSDE} for some $p \in (1,\infty]$.
If there exist nonnegative constants $D_1, D_2$ such that
$$
\norm{f(t,Y,M) - f(t,Y',M')}_p \le D_1 \norm{Y-Y'}_{\bbS^p_{[0,t]}} + D_2 \norm{M-M'}_{\bbS^p}
$$
for all $Y,Y' \in \bbS^p$ and $M,M' \in \bbM^p_0$, then  
\beas
&&
\norm{f(t,Y,M) - f(t,Y',M')}_p\\
&\le& D_1 \norm{Y - Y_0 + M - (Y'-Y'_0+M')}_{\bbS^p_{[0,t]}} + D_1 \norm{Y_0-Y'_0}_p
+ (D_1 + D_2) \norm{M-M'}_{\bbS^p}.
\eeas
So the assumptions of Proposition \ref{prop:genBSDE} only hold if 
the constants $D_1$ and $D_2$ are small enough, or alternatively, the maturity $T$ 
is sufficiently short. This is in line with Remark \ref{rem:counter} above (note that 
\eqref{counterex} is a path-dependent BSDE of the form \eqref{BSDEYM} with $f(t,Y,M) = a Y_0$).
\end{Remark}

The following corollary gives conditions under which it directly follows from Proposition 
\ref{prop:genBSDE} that the BSDE \eqref{BSDEYM} has a unique solution for 
arbitrary Lipschitz constant and maturity. More examples of \eqref{BSDEYM}  
admitting solutions under general Lipschitz assumptions are given in 
Section \ref{subsec:BSDEBrownPoiss} below.

\begin{corollary}\label{cor:pathdep}
Let $p \in (1, \infty]$ and consider a terminal condition
$\xi \in L^{p}(\scF_{T})^{d}$ together with a driver $f$ 
of the form \eqref{gendriver} fulfilling condition {\rm (i)} of Proposition \ref{prop:genBSDE} such that
$f(t,Y,M) = h(t, Y- Y_0 + M)$ for a mapping $h : [0,T] \times \Omega \times \bbS^p_0 \to\bbR^d$.
If
$$
\norm{h(t,X) - h(t,X')}_p \le C \norm{X-X'}_{\bbS^p_{[0,t]}}, \quad X,X' \in \bbS^p_0
$$
for a constant $C \ge 0$, then the BSDE \eqref{BSDEYM} has a unique solution $(Y,M)\in\bbS^{p}\times\bbM^{p}_{0}$.
\end{corollary}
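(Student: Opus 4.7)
The plan is to recognize this as a direct application of Proposition \ref{prop:genBSDE}. The whole point of the hypothesis $f(t,Y,M) = h(t, Y-Y_0+M)$ is that the driver depends on $(Y,M)$ only through the ``centered plus martingale'' combination $Y - Y_0 + M$, which is exactly the quantity measured by $C_1$ (and not by $C_2$) in condition (ii) of Proposition \ref{prop:genBSDE}. Consequently, the Lipschitz estimate holds with $C_2 = 0$, and the delicate inequality $C_2 < c_p C_1 / (e^{C_1 T} - 1)$ collapses to the trivial $0 <$ positive, regardless of the size of $C$ or $T$.

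First I would check that $h(t, Y-Y_0+M)$ is well-defined, i.e.\ that $X := Y - Y_0 + M$ lies in $\bbS^p_0$ whenever $(Y,M) \in \bbS^p \times \bbM^p_0$. This is immediate: $X$ is RCLL and adapted, $X_0 = Y_0 - Y_0 + M_0 = 0$ since $M \in \bbM^p_0$, and the triangle inequality gives $\norm{X}_{\bbS^p} \le 2 \norm{Y}_{\bbS^p} + \norm{M}_{\bbS^p} < \infty$.

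Next I would verify condition (ii) of Proposition \ref{prop:genBSDE}. Setting $X = Y - Y_0 + M$ and $X' = Y' - Y'_0 + M'$ and using the Lipschitz assumption on $h$,
\begin{align*}
\norm{f(t,Y,M) - f(t,Y',M')}_p = \norm{h(t,X) - h(t,X')}_p \le C \norm{X - X'}_{\bbS^p_{[0,t]}},
\end{align*}
so condition (ii) holds with $C_1 := \max(C,1) > 0$ and $C_2 := 0$. The required inequality $C_2 < c_p C_1 /(e^{C_1 T} - 1)$ is then satisfied because the right-hand side is strictly positive. Condition (i) is part of the hypotheses of the corollary, so Proposition \ref{prop:genBSDE} applies and yields a unique solution $(Y,M) \in \bbS^p \times \bbM^p_0$.

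There is no real obstacle here; the substantive content was already packaged into Proposition \ref{prop:genBSDE}, and the corollary simply exhibits a structural situation in which the smallness condition on $C_2$ is automatically met. The only point worth being careful about is the $C=0$ degenerate case, which is handled by choosing $C_1$ strictly positive (the Lipschitz inequality for $h$ with constant $0$ trivially upgrades to one with constant $C_1 = 1$).
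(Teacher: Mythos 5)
Your proposal is correct and is exactly the argument the paper intends: since the driver depends on $(Y,M)$ only through $Y-Y_0+M$, condition (ii) of Proposition \ref{prop:genBSDE} holds with $C_2=0$, so the smallness constraint $C_2 < c_p C_1/(e^{C_1T}-1)$ is automatic for any Lipschitz constant and maturity. Your extra care with the degenerate case $C=0$ (taking $C_1=\max(C,1)>0$) is a sensible touch, though not essential.
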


\subsection{Generalized Lipschitz BSDEs based on a Brownian motion and a Poisson random measure}
\label{subsec:BSDEBrownPoiss}

Let $W$ be an $n$-dimensional Brownian motion and $N$ an independent Poisson random measure 
on $[0,T] \times E$ for $E = \bbR^m \setminus \crl{0}$ with an
intensity measure of the form $dt \mu(dx)$ for a measure $\mu$ over 
the Borel $\sigma$-algebra ${\cal B}(E)$ of $E$ satisfying
$$
\int_E (1 \wedge |x|^2) \mu(dx) < \infty.
$$
Denote by $\tilde{N}$ the compensated random measure $N(dt,dx) - dt \mu(dx)$, and assume
that, for $A \in {\cal B}(E)$ with $\mu(A) < \infty$, $\tilde{N}([0,t] \times A)$ and $W$ are martingales with respect to $\bbF$. We need the following spaces of integrands:

\begin{itemize}
\item
$\mathbb{H}^2$: all $\mathbb{R}^{d \times n}$-valued predictable processes $Z$ satisfying
$$
\norm{Z}_{\mathbb{H}^2} := \brak{\int_0^T \bbE |Z_t|^2 dt}^{1/2} < \infty.
$$ 
\item
$L^2(\tilde{N})$: all $\scP \otimes {\cal B}(E)$-measurable 
mappings $U: [0,T] \times \Omega \times E \to \mathbb{R}^d$ such that
$$
\norm{U}_{L^2(\tilde{N})}:=\brak{\int_0^T \int_E \bbE |U_t(x)|^2 \mu(dx)dt}^{1/2} < \infty,
$$
where $\scP$ is the $\sigma$-algebra of $\bbF$-predictable
subsets of $[0,T] \times \Omega$. 
\end{itemize}
Any square-integrable $\bbF$-martingale $M \in \bbM^2_0$ has a unique representation of the form
\be \label{intrep}
M_t= \int_0^t Z^M_s dW_s + \int_0^t \int_E U^M_s(x) \tilde{N}(ds,dx) + K^M_t
\ee
for a triple $(Z^M,U^M,K^M) \in\bbH^2 \times L^2(\tilde{N}) \times\bbM^2_0$
such that $K^M$ is strongly orthogonal to $W$ and $\tilde{N}$ (see e.g. Jacod, 1979). 
This makes it possible to consider BSDEs 
\be \label{bsdeYZU}
Y_t= \xi + \int_t^T f(s,Y,Z^M,U^M)ds + M_T-M_t
\ee
for terminal conditions $\xi \in L^2(\scF_T)^d$ and drivers
\be \label{driverYZU}
f:[0,T] \times \Omega \times \bbS^2 \times \bbH^2 \times L^2(\tilde{N})\to\bbR^d.
\ee
In the special case where the filtration $\bbF$ is generated by $W$ and $N$, 
the orthogonal part $K^M$ in the representation \eqref{intrep} vanishes
(see e.g. Ikeda and Watanabe, 1989), and as a result, \eqref{bsdeYZU} can be written as
\be \label{standbsde}
Y_t= \xi + \int_t^T f(s,Y,Z^M,U^M)ds +  
\int_t^T Z^M_s dW_s + \int_t^T \int_E U^M_s(x) \tilde{N}(ds,dx).
\ee
This generalizes the classical BSDEs of Pardoux and Peng (1990) and Tang and Li (1994),
which have drivers that at time $s$ only depend on the realizations 
$Y_s(\omega)$, $Z^M_s(\omega)$, $U^M_s(\omega)$, to equations with functional drivers 
that can depend on the full processes $Y$, $Z^M$ and $U^M$.

In the rest of this subsection, we consider different specifications of \eqref{bsdeYZU} with drivers
depending on the future, present or past of the processes $Y$, $Z^M$ and $U^M$. In all instances, we
are able to derive the existence of a unique solution for an arbitrary Lipschitz constant and maturity. 
In the following proposition, the driver can depend on the present and future of $Y$, $Z^M$ and $U^M$, 
but not on their past -- this is ruled out by condition (ii). For its proof, we need the isometry
\be \label{isometry}
\bbE |M_t|^2 = \int_0^t \bbE |Z^M_s|^2 ds+\int_0^t \int_E \bbE |U^M_s(x)|^2 \mu(dx)ds + \bbE |K^M_t|^2
\ee
(see e.g. Jacod, 1979). 

\begin{proposition} \label{prop:anti}
The BSDE \eqref{bsdeYZU} has a unique solution $(Y,M) \in \bbS^2 \times \bbM^2_0$ for every terminal condition 
$\xi \in L^2(\scF_T)^d$ and driver
$$
f: [0,T] \times \Omega \times \bbS^2 \times \bbH^2 \times L^2(\tilde{N}) \to \bbR^d
$$
satisfying the following two conditions:
\begin{itemize}
\item[{\rm (i)}]
For all $(Y,Z,U) \in \mathbb{S}^2 \times \mathbb{H}^2\times L^2(\tilde{N})$, 
$f(t,Y, Z,U)$ is progressively measurable with \newline
$\int_0^T \norm{f(t,0,0,0}_2 dt < \infty$.
\item[{\rm (ii)}] There exists a constant $C \ge 0$ such that
$$
\int_t^T \norm{f(s,Y,Z, U) - f(s,Y',Z',U')}_2 ds
\le C \int_t^T \norm{Y_s -Y'_s}_2+\norm{Z_s -Z'_s}_2 + \norm{U_s-U'_s}_{L^2(\bbP \otimes \mu)} ds
$$
for all $t \in[0,T]$ and $(Y,Z,U),(Y',Z',U')\in \bbS^2\times\bbH^2\times L^2(\tilde{N})$.
\end{itemize} 
\end{proposition}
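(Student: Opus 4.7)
The plan is to reformulate the BSDE as a fixed-point problem for a Picard map and apply Banach's fixed-point theorem in an equivalent weighted norm on $\bbS^2 \times \bbM^2_0$, so that the potentially large Lipschitz constant from (ii) is absorbed by the weight. Define $\Gamma : \bbS^2 \times \bbM^2_0 \to \bbS^2 \times \bbM^2_0$ by $\Gamma(Y, M) := (\tilde{Y}, \tilde{M})$, where
\begin{align*}
\tilde{Y}_t &:= \bbE_t\edg{\xi + \int_t^T f(s, Y, Z^M, U^M)\,ds},\\
\tilde{M}_t &:= \bbE_t\edg{\xi + \int_0^T f(s, Y, Z^M, U^M)\,ds} - \bbE_0\edg{\xi + \int_0^T f(s, Y, Z^M, U^M)\,ds}.
\end{align*}
Well-definedness in $\bbS^2 \times \bbM^2_0$ follows from (i) combined with (ii) applied against $(Y', Z', U') = (0, 0, 0)$ (which controls $\int_0^T \norm{f(s, Y, Z^M, U^M)}_2\,ds$), together with Doob's $L^2$-maximal inequality. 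The fixed points of $\Gamma$ coincide with solutions of the BSDE.

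The key contraction estimate is driven by the bounds
$$
\norm{\tilde{Y}_t - \tilde{Y}'_t}_2 \leq \int_t^T \norm{\delta f(s)}_2\,ds, \qquad \norm{\tilde{M} - \tilde{M}'}_{\bbS^2} \leq 2\int_0^T \norm{\delta f(s)}_2\,ds,
$$
where $\delta f(s) := f(s, Y, Z^M, U^M) - f(s, Y', Z^{M'}, U^{M'})$. These follow from $\bbE_t$ being an $L^2$-contraction, Minkowski's integral inequality, and Doob's inequality applied to the martingale $\tilde M - \tilde M'$. Condition (ii) then bounds $\int_t^T \norm{\delta f(s)}_2\,ds$ by $C \int_t^T [\norm{\delta Y_s}_2 + \norm{\delta Z^M_s}_2 + \norm{\delta U^M_s}_{L^2(\bbP \times \mu)}]\,ds$, and the $Z, U$-integrals are linked to $\norm{\delta M}_{\bbS^2}$ via the isometry \eqref{isometry}.

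Equip $\bbS^2 \times \bbM^2_0$ with an equivalent weighted norm, for instance
$$
\norm{(Y, M)}_\beta^2 := \bbE\sup_{t \in [0, T]} e^{-2\beta(T-t)}|Y_t|^2 + \bbE\int_0^T e^{-2\beta(T-s)}\,d[M]_s,
$$
whose martingale component decomposes through \eqref{isometry} into weighted integrals of $|Z^M|^2$, $|U^M|^2$ and $d[K^M]_s$. Applying Cauchy--Schwarz in time against the exponential weight to the integral bounds above, the $Y$-contribution carries a factor of order $1/\beta$ and each of the $Z, U$-contributions a factor of order $1/\sqrt{\beta}$, giving $\norm{\Gamma(Y, M) - \Gamma(Y', M')}_\beta \leq K(\beta)\,\norm{(Y - Y', M - M')}_\beta$ with $K(\beta) \to 0$ as $\beta \to \infty$. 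For $\beta$ large enough, $\Gamma$ is a strict contraction, and Banach's theorem yields existence and uniqueness of the solution in $\bbS^2 \times \bbM^2_0$.

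The main technical obstacle is the integral (rather than pointwise) nature of condition (ii). Classical weighted-norm arguments for anticipating BSDEs (such as Peng and Yang, 2009) invoke Ito's formula with an $L^2$-in-time weight $\bbE\int_0^T e^{2\beta s}|Y_s|^2\,ds$, which naturally requires a pointwise bound on $\norm{\delta f(s)}_2^2$ that is strictly stronger than (ii). The workaround is to couple a weighted sup-in-time norm on $Y$ with a weighted martingale norm on $M$ --- tied together by \eqref{isometry} --- so that only the $L^1$-in-time integral $\int_0^T \norm{\delta f(s)}_2\,ds$ has to be controlled. Cauchy--Schwarz in $s$ then absorbs the exponential weight at the price of $1/\sqrt{\beta}$ rather than $1/\beta$ decay on the martingale terms, which still suffices for the contraction.
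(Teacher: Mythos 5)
Your overall strategy---a global Picard map on $\bbS^2\times\bbM^2_0$ made contractive by an exponential weight---is a genuinely different route from the paper's, and it would have the attractive feature of handling the anticipating dependence in one stroke. But the contraction estimate does not close as you describe it, in two places. First, the $1/\beta$ gain for the $Y$-component comes from the factor $e^{-\beta(s-t)}$ in $e^{-\beta(T-t)}=e^{-\beta(s-t)}e^{-\beta(T-s)}$, integrated over $s\in[t,T]$ at \emph{fixed} $t$; this produces a bound on $\sup_t e^{-\beta(T-t)}\norm{\tilde Y_t-\tilde Y'_t}_2$, not on the norm $\bbE\sup_t e^{-2\beta(T-t)}|\tilde Y_t-\tilde Y'_t|^2$ you actually chose. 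To control the latter you must put the weight inside the conditional expectation \emph{before} taking the supremum and then apply Doob, which forfeits the decaying factor $e^{-\beta(s-t)}$ and leaves constants of order $T$, not $1/\beta$. Second, and more seriously, you give no estimate of the weighted bracket norm $\bbE\int_0^Te^{-2\beta(T-s)}d[\tilde M-\tilde M']_s$ of the \emph{output} martingale. The only bound you derive for $\tilde M-\tilde M'$ is the unweighted one, $\norm{\tilde M-\tilde M'}_{\bbS^2}\le 2\int_0^T\norm{\delta f(s)}_2\,ds$; bounding the weighted bracket norm by this and then converting the unweighted time-integrals of $\norm{\delta Z_s}_2,\norm{\delta U_s}_{L^2(\bbP\otimes\mu)}$ back into your weighted norms costs a factor $e^{\beta T}$, so the martingale part of your $K(\beta)$ \emph{grows} with $\beta$ rather than vanishing. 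The standard repair is to apply It\^o's formula to $e^{-2\beta(T-t)}|\tilde Y_t-\tilde Y'_t|^2$, which yields $\bbE\int_0^Te^{-2\beta(T-s)}d[\tilde M-\tilde M']_s\le 2\int_0^Te^{-2\beta(T-s)}\norm{\tilde Y_s-\tilde Y'_s}_2\norm{\delta f(s)}_2\,ds$; this, together with a Fubini/integration-by-parts step to thread the weight through the purely integral condition (ii) (namely $\int_0^Te^{-\beta(T-s)}\norm{\delta f(s)}_2ds=e^{-\beta T}\int_0^T\norm{\delta f}_2ds+\beta\int_0^Te^{-\beta(T-u)}\int_u^T\norm{\delta f(s)}_2\,ds\,du$, to each tail of which (ii) applies), can be made to work---but neither ingredient appears in your argument, and your stated workaround ("only the unweighted $L^1$-in-time integral of $\delta f$ has to be controlled") is precisely what fails.

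For comparison, the paper sidesteps weighted norms entirely. It truncates the generator to a short terminal window $[T-\delta,T]$, where Cauchy--Schwarz in time and the isometry \eqref{isometry} show that $F_t(Y,M)=\int_0^tf(s,Y,Z^M,U^M)1_{[T-\delta,T]}(s)ds$ is Lipschitz on $\bbS^2\times\bbM^2_0$ with constant $C\sqrt{3\delta(\delta+1)}<1/5=c_2$; Theorem \ref{thm:contra} (the fixed point of $G$ in $L^2(\scF_T)^d$) then gives a unique solution there, and the solution is extended by iterating backward over windows of length $\delta$, freezing the already-determined future values of $(Y,Z^M,U^M)$ inside the anticipating driver. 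If you want to keep your global approach, you should switch the $Y$-component of the contraction norm to $\sup_te^{-2\beta(T-t)}\bbE|Y_t|^2$ (upgrading to $\bbS^2$ a posteriori via Doob), add the It\^o and Fubini steps above, and verify completeness of the resulting normed space; as written, the proposal has a genuine gap exactly at the point it identifies as the main difficulty.
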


\begin{proof}
Choose $\delta > 0$ so that
\[
C\sqrt{3\delta(\delta +1)} < \frac{1}{5} \quad \text{ and }
\quad k :=T/\delta \in \bbN.
\]
By \eqref{isometry}, one has for every $M \in \bbM^2_0$,
\beas
&& \brak{\int_0^t \norm{Z^M_s}_{2} + \norm{U^M_s}_{L^2(\bbP \otimes \mu)} ds}^2
\le t \int_0^t \brak{\norm{Z^M_s}_{2} + \norm{U^M_s}_{L^2(\bbP \otimes \mu)}}^2 ds\\
&& \le 2 t \int_0^t \norm{Z^M_s}^2_{2} + \norm{U^M_s}^2_{L^2(\bbP \otimes \mu)} ds
\le 2 t \norm{M_t}^2_2.
\eeas
Therefore, one obtains from the assumptions for all $(Y,M) \in \bbS^2 \times \bbM^2_0$,
\beas
&& \norm{\int_{T-\delta}^T |f(s,Y, Z^M,U^M)| ds}_2 \le 
\int_{T-\delta}^T \norm{f(s,Y, Z^M, U^M)}_2 ds\\
&\le& \int_{T-\delta}^T \norm{f(s,0,0,0)}_2 ds + C \int_{T-\delta}^T \brak{\norm{Y_s}_2 +
\norm{Z^M_s}_2 + \norm{U^M_s}_{{L^2(\bbP \otimes \mu)}}} ds < \infty,
\eeas
where the first inequality follows from the same argument as in the proof of Proposition
\ref{prop:genBSDE}. In particular, for every pair $(Y,M) \in \bbS^2 \times \bbM^2_0$,
$$
F_t(Y,M) :=\int_0^t f(s,Y,Z^M,U^M)1_{[T-\delta, T]}(s)ds
$$ 
defines a process in $\bbS^2_0$. Furthermore, one has 
\begin{align*}
& \norm{F(Y,M)-F(Y',M')}_{\bbS^2}\\
\le &  \norm{\int_{T-\delta}^T \abs{f(s,Y,Z^M,U^M)-f(s,Y',Z^{M'}, U^{M'})} ds}_2\\
\le &  \int_{T-\delta}^T \norm{f(s,Y,Z^M,U^M) -f(s,Y',Z^{M'}, U^{M'})}_2 ds\\
\le &  C \int_{T-\delta}^T \norm{Y_s-Y_s'}_2
+ \norm{Z^M_s-Z^{M'}_s}_2 + \norm{U^M_s - U^{M'}_s}_{L^2(\bbP \otimes \mu)} ds\\
\le & C \sqrt{\delta \int_{T-\delta}^T \brak{\norm{Y_s-Y_s'}_2
+ \norm{Z^M_s-Z^{M'}_s}_2 + \norm{U^M_s - U^{M'}_s}_{L^2(\bbP \otimes \mu)}}^2 ds}\\
\le & C \sqrt{3 \delta \int_{T-\delta}^T \norm{Y_s-Y_s'}^2_2
+ \norm{Z^M_s-Z^{M'}_s}^2_2 + \norm{U^M_s - U^{M'}_s}^2_{L^2(\bbP \otimes \mu)} ds}\\
\le & C \sqrt{3 \delta^2 \norm{Y-Y'}^2_{\bbS^2} + 3 \delta \norm{M - M'}^2_{\bbS^2}}\\
\le & C \sqrt{3 \delta(\delta +1)} (\norm{Y-Y'}_{\bbS^2} + \norm{M - M'}_{\bbS^2})
\end{align*}
for all $(Y,M), (Y',M') \in\bbS^2\times\bbM^2_0$. Since $C \sqrt{3 \delta(\delta + 1)} < 1/5$,
one obtains from Theorem \ref{thm:contra} that the BSDE
$$
Y_t = \xi + \int_t^T f(s,Y,Z^M,U^M)
1_{[T-\delta, T]}(s) ds + M_T-M_t
$$
has a unique solution $(Y^{(k)},M^{(k)})$ in $\bbS^2 \times\bbM^2_0$. Now, consider the BSDE
\be \label{BSDEdelta}
Y_t = Y^{(k)}_{T-\delta} + 
\int_t^{T-\delta} f^{(k-1)}(s,Y,Z^M,U^M) 1_{[T-2\delta, T-\delta]}(s) ds +M_{T-\delta}-M_t
\ee
on the time interval $[0,T-\delta]$, where $f^{(k-1)}$ is given by 
\begin{align*}
f^{(k-1)}(s,Y,Z,U):= f \bigl( s, (Y,Z,U) 1_{[0, T-\delta)} +
\bigl(Y^{(k)},Z^{M^{(k)}}, U^{M^{(k)}}\bigr)1_{[T-\delta,T]}
\bigr). % &:= f\brak{s, Y 1_{[0, T-\delta)} + Y^{(k)}1_{[T-\delta,T]} ,
% Z 1_{[0,T-\delta)} + Z^{M^{(k)}}1_{[T-\delta,T]}, U 1_{[0,T-\delta)}
%+ U^{M^{(k)}}1_{[T-\delta,T]}}.
\end{align*}
Then the conditions (i)--(ii) still hold. So \eqref{BSDEdelta} has a unique solution $(Y^{(k-1)},M^{(k-1)})$ in $\bbS^2 \times\bbM_0^2$ over the time interval $[0,T-\delta]$.
Repeating the same argument, one obtains solutions $(Y^{(j)},M^{(j)})$, $j=1, \dots, k$. If one sets
$Y_t := Y^{(1)}_t$, $M_t := M^{(1)}_t$ for $0 \le t \le \delta$ and 
$Y_t := Y^{(j)}_t$, $M_t - M_{(j-1) \delta} := M^{(j)}_t - M^{(j)}_{(j-1) \delta}$
for $(j-1) \delta < t \le j \delta$, $j =2, \dots, k$, then $(Z^M_t, U^M_t) = (Z^{M^{(j)}}_t, U^{M^{(j)}}_t)$
for $(j-1) \delta < t \le j \delta$. Since this construction is backwards in time and by condition (ii), 
$f(t,Y,Z^M,U^M)$ cannot depend on the past of the processes $Y,Z^M$ and $U^M$, the pair
$(Y,M)$ forms a unique solution of \eqref{bsdeYZU} in $\bbS^2 \times \bbM^2_0$.
\end{proof}

\begin{Remark}
The assumptions of Proposition \ref{prop:anti} allow for drivers $f$ such that $f(t,Y,Z,U)$ depends on the future of the 
processes $Y, Z, U$ in a general ${\cal F}_t$-measurable way. This covers BSDEs with anticipating drivers of the form
$$
\begin{aligned}
- dY_t &= f(t,Y_t, Z_t, \mathbb{E}_t Y_{t + \delta(t)}, \mathbb{E}_t Z_{t + \zeta(t)}) dt + Z_t dW_t, \quad && t \in [0,T]\\
(Y_t,Z_t) &= (\xi_t,\eta_t), && t \in [T,T+K]
\end{aligned}
$$
or more generally,
\be \label{anti}
\begin{aligned}
- dY_t &= f(t,Y_t,Z_t, Y_{t + \delta(t)}, Z_{t + \zeta(t)}) dt + Z_t dW_t, \quad && t \in [0,T]\\
(Y_t,Z_t) &= (\xi_t, \eta_t), && t \in [T,T+K]
\end{aligned}
\ee
for a Brownian motion $(W_t)_{t\in[0,T]}$, continuous functions $\delta, \zeta : [0,T] \to \mathbb{R}_+$, and 
stochastic processes $(\xi_t)_{t\in[T,T+K]}$, $(\eta_t)_{t\in[T,T+K]}$. Equations of the form \eqref{anti} were introduced by 
Peng and Yang (2009) as duals of time-delayed forward SDEs. Their existence and uniqueness result, 
Theorem 4.2, as well as extensions for equations with jumps, can easily be derived from Proposition \ref{prop:anti}.
\end{Remark}

As an immediate consequence of Proposition \ref{prop:anti} one obtains the following result
for BSDEs with functional drivers depending on $Y_s$, $Z^M_s$ and $U^M_s$.

\begin{corollary} \label{cor:BSDEBuck}
The BSDE 
\be \label{bsdeBuck}
Y_t=\xi+\int_t^T f(s,Y_s,Z^M_s,U^M_s)ds + M_T-M_t 
\ee
has a unique solution $(Y,M) \in \bbS^2 \times \bbM^2_0$ for every terminal condition 
$\xi \in L^2(\scF_T)^d$ and driver
$$
f: [0,T] \times \Omega\times L^2(\scF_T)^d\times L^2(\scF_T)^{d\times n}
\times L^2(\Omega\times E,\scF_T \otimes {\cal B}(E), \bbP\otimes\mu;\bbR^d) \to \bbR^d
$$
satisfying the following two conditions:
\begin{itemize}
\item[{\rm (i)}]
For all $(Y,Z,U) \in \mathbb{S}^2 \times \mathbb{H}^2\times L^2(\tilde{N})$, 
$f(t,Y_t, Z_t,U_t)$ is progressively measurable with \newline
$\int_0^T \norm{f(t,0,0,0}_2 dt < \infty$.
\item[{\rm (ii)}] There exists a constant $C \ge 0$ such that
$$
\norm{f(t,Y_t,Z_t, U_t)-f(t,Y'_t,Z'_t,U'_t)}_2
\le C \brak{\norm{Y_t -Y'_t}_2+\norm{Z_t -Z'_t}_2 
+ \norm{U_t-U'_t}_{L^2(\bbP \times \mu)}}
$$
for all $t \in[0,T]$ and $(Y,Z,U),(Y',Z',U')\in \bbS^2\times\bbH^2\times L^2(\tilde{N})$.
\end{itemize} 
\end{corollary}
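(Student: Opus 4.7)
The plan is to recognize that Corollary \ref{cor:BSDEBuck} is an immediate specialization of Proposition \ref{prop:anti}, obtained by lifting the pointwise driver to a functional one. I would first define
$$\tilde{f}:[0,T]\times\Omega\times\bbS^2\times\bbH^2\times L^2(\tilde N)\to\bbR^d,\quad \tilde{f}(t,Y,Z,U):=f(t,Y_t,Z_t,U_t).$$
To see this is well-defined, observe that for any $(Y,Z,U)\in\bbS^2\times\bbH^2\times L^2(\tilde N)$, the sampled value $Y_t$ lies in $L^2(\scF_T)^d$ since $\norm{Y_t}_2\le\norm{Y}_{\bbS^2}$, and for $dt$-almost every $t$ one has $Z_t\in L^2(\scF_T)^{d\times n}$ and $U_t\in L^2(\Omega\times E,\scF_T\otimes\scB(E),\bbP\otimes\mu;\bbR^d)$ by Fubini. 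Hence the plugged-in arguments sit in the domain of $f$ for a.e.\ $t$, so $\tilde{f}$ is defined almost everywhere, which is all that is needed.

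Next, I would verify the two hypotheses of Proposition \ref{prop:anti} for $\tilde{f}$. Condition (i) is inherited directly: the progressive measurability of $t\mapsto f(t,Y_t,Z_t,U_t)$ is exactly the hypothesis of the corollary, and $\tilde{f}(t,0,0,0)=f(t,0,0,0)$, so $\int_0^T\norm{\tilde{f}(t,0,0,0)}_2\,dt<\infty$. For condition (ii), I would simply integrate the pointwise Lipschitz estimate of the corollary from $t$ to $T$, obtaining
$$\int_t^T\norm{\tilde{f}(s,Y,Z,U)-\tilde{f}(s,Y',Z',U')}_2\,ds \le C\int_t^T\brak{\norm{Y_s-Y'_s}_2+\norm{Z_s-Z'_s}_2+\norm{U_s-U'_s}_{L^2(\bbP\times\mu)}}ds,$$
which is precisely the integrated Lipschitz condition required by Proposition \ref{prop:anti}.

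Having verified both hypotheses, I would apply Proposition \ref{prop:anti} to deduce the existence of a unique $(Y,M)\in\bbS^2\times\bbM^2_0$ solving
$$Y_t=\xi+\int_t^T\tilde{f}(s,Y,Z^M,U^M)\,ds+M_T-M_t.$$
Since $\tilde{f}(s,Y,Z^M,U^M)=f(s,Y_s,Z^M_s,U^M_s)$ by construction, this is exactly the BSDE \eqref{bsdeBuck}, completing the proof. The only potential obstacle is the measurability/integrability bookkeeping in defining $\tilde{f}$; all the analytic substance has already been carried out in Proposition \ref{prop:anti}.
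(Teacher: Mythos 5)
Your proposal is correct and matches the paper's intent exactly: the paper presents this corollary as an immediate consequence of Proposition \ref{prop:anti} with no separate proof, and your lifting of the pointwise driver to the functional driver $\tilde f(t,Y,Z,U)=f(t,Y_t,Z_t,U_t)$ followed by integrating the pointwise Lipschitz bound is precisely the intended argument.
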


Corollary \ref{cor:BSDEBuck} can be used in conjunction with Theorem 2.3 to 
deduce that the following time-delayed BSDE has a unique solution. This extends Theorem 
2.3 of Delong and Imkeller (2010a) to the case of multidimensional 
BSDEs with jumps and functional dependence in the driver. In addition, our
integrability condition on the terminal condition is a bit weaker.

\begin{proposition}\label{prop:fullpath}
Let $\xi \in L^{2}(\scF_{T})^d$ and $\nu$ be a finite Borel measure on $[0,T]$. Then the BSDE
\be \label{pathBSDE}
Y_t = \xi + \int_t^T \int_{[0,s]} g(s-r, Z^M_{s-r}, U^M_{s-r}) \nu(dr) ds + M_T - M_t
\ee
has a unique solution $(Y,M) \in\bbS^{2}\times\bbM^{2}_{0}$ for every mapping
$$
g : [0,T] \times \Omega \times L^{2}(\scF_{T})^{d\times n}
\times L^{2}(\Omega\times E,\scF_{T}\otimes\scB(E),\bbP\otimes\mu;\bbR^{d})\to\bbR^d
$$ 
satisfying the following two conditions:
\begin{itemize}
\item[{\rm (i)}]
For all $(Z,U) \in \mathbb{H}^2\times L^2(\tilde{N})$, $g(t, Z_t,U_t)$ is progressively 
measurable, and $\int_0^T \norm{g(t,0,0}_2 dt < \infty$.
\item[{\rm (ii)}] There exists a constant $C \ge 0$ such that
\[
\norm{g(t,Z_{t},U_{t})-g(t,Z'_{t},U'_{t})}_{2}\leq
C\brak{\norm{Z_{t}-Z'_{t}}_{2}+\norm{U_{t}-U'_{t}}_{L^{2}(\bbP\otimes\mu)}}
\]
for all t $\in[0,T]$ and $(Z,U), (Z',U') \in \bbH^{2}\times L^{2}(\tilde N)$.
\end{itemize}
\end{proposition}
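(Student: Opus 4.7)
My plan is to transform the BSDE \eqref{pathBSDE} into a pointwise-Lipschitz BSDE of the form covered by Corollary \ref{cor:BSDEBuck}. Fubini's theorem combined with the change of variables $u = s - r$ yields
$$
\int_t^T \int_0^s g(s-r, Z^M_{s-r}, U^M_{s-r}) \, \nu(dr) \, ds = \int_0^T g(u, Z^M_u, U^M_u) \, \nu\bigl([\max(t-u, 0), T-u]\bigr) \, du,
$$
which splits into a forward part over $u \ge t$ carrying the weight $\nu([0, T-u])$, independent of $t$, plus a backward correction
$$
\psi_t := \int_0^t g(u, Z^M_u, U^M_u) \, \nu([t-u, T-u]) \, du
$$
that is $\scF_t$-measurable and continuous in $t$, and can be absorbed into the left-hand side.

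Setting $\tilde g(u, z, v) := \nu([0, T-u]) \, g(u, z, v)$ and $\tilde Y_t := Y_t - \psi_t$, the BSDE \eqref{pathBSDE} is equivalent to
$$
\tilde Y_t = \xi + \int_t^T \tilde g(u, Z^M_u, U^M_u) \, du + M_T - M_t.
$$
The driver $\tilde g$ inherits progressive measurability, satisfies $\int_0^T \|\tilde g(u,0,0)\|_2 \, du \le \nu([0,T]) \int_0^T \|g(u,0,0)\|_2 \, du < \infty$, and is pointwise Lipschitz in $(z, v)$ with constant at most $C \nu([0, T])$. Corollary \ref{cor:BSDEBuck} therefore yields a unique $(\tilde Y, M) \in \bbS^2 \times \bbM^2_0$ solving the transformed equation.

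To conclude, I would reconstruct $Y := \tilde Y + \psi$ (using the $M$ just obtained to define $\psi$) and verify that $(Y, M)$ solves \eqref{pathBSDE} by reversing the Fubini identity; consistency at $t = T$ is automatic because $\psi_T = \int_0^T g(u, Z^M_u, U^M_u) \, \nu(\{T-u\}) \, du = 0$, as $\nu$ has at most countably many atoms. Membership $Y \in \bbS^2$ reduces to $\psi \in \bbS^2$, which follows from the pathwise bound $\sup_t |\psi_t| \le \nu([0,T]) \int_0^T |g(u, Z^M_u, U^M_u)| \, du$, Minkowski's inequality, the affine growth of $g$ derived from condition (ii), and the isometry \eqref{isometry} combined with Cauchy--Schwarz to control $\int_0^T \bigl(\|Z^M_u\|_2 + \|U^M_u\|_{L^2(\bbP \otimes \mu)}\bigr) du$ by $\sqrt{2T}\,\|M\|_{\bbS^2}$. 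Uniqueness is automatic: any solution of \eqref{pathBSDE} induces via $\tilde Y = Y - \psi$ a solution of the transformed BSDE, which is unique by Corollary \ref{cor:BSDEBuck}.

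The only substantive step is spotting the Fubini/change-of-variables identity and observing that on $u \ge t$ the weight $\nu([0, T-u])$ carries no $t$-dependence; the $\scF_t$-measurable past contribution is then absorbed into the unknown, reducing \eqref{pathBSDE} to the pointwise-Lipschitz setting of Corollary \ref{cor:BSDEBuck}.
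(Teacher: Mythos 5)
Your proof is correct, and it rests on the same key identity as the paper's --- Fubini plus the change of variables $u=s-r$, producing the weight $\nu[0,T-u]$ and reducing matters to Corollary \ref{cor:BSDEBuck} --- but you implement the reduction differently. The paper applies the identity only at $t=0$: since the generator does not depend on $Y$, Theorem \ref{thm:fix} says solutions of \eqref{pathBSDE} are in bijection with fixed points of $G(V)=\xi+F_T(Y^V,M^V)$, and $F_T$ coincides with the terminal value of the generator $h(s,Z_s,U_s)=\nu[0,T-s]\,g(s,Z_s,U_s)$; so the two equations share the \emph{same} map $G$, and existence and uniqueness transfer with no further work. You instead establish a pathwise, for-all-$t$ equivalence by splitting the weight $\nu([\max(t-u,0),T-u])$ into the $t$-independent part $\nu([0,T-u])$ on $\{u\ge t\}$ and the adapted remainder $\psi_t$, which you absorb into the unknown via $\tilde Y=Y-\psi$. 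This is sound, but it obliges you to verify several things the paper's route makes unnecessary: adaptedness and (a.s.) continuity of $\psi$ (right-continuity of $t\mapsto\nu([t-u,T-u])$ fails exactly at the countably many atoms of $\nu$, a null set of $u$'s, so dominated convergence saves you --- worth stating explicitly), the vanishing $\psi_T=0$ (again via countability of atoms), and $\psi\in\bbS^2$. In short, your argument is a more hands-on semimartingale-level transformation, while the paper's exploits the fixed-point correspondence to see that only the terminal value $F_T$ matters; both are valid, and yours has the minor virtue of exhibiting the equivalence of the two BSDEs at every time $t$, not just at the level of their solution sets.
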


\begin{proof}
The generator corresponding to the BSDE \eqref{pathBSDE} is given by
$$
F_t(M) = \int_0^t \int_{[0,s]} g(s-r, Z^M_{s-r}, U^M_{s-r}) \nu(dr) ds.
$$
Since it does not depend on $Y$, it satisfies condition (S). So,
by Theorem \ref{thm:fix}, it is enough to show that there exists
a unique $V \in L^{2}(\scF_{T})^{d}$ such that
\be \label{fixV}
V = G(V) = \xi+ \int_0^T \int_{[0,s]} g(s-r,Z_{s-r}^{M^V},U^{M^V}_{s-r})\nu(dr)ds.
\ee
From Fubini's theorem and a change of variable, one obtains 
$$
\int_0^T \int_{[0,s]} g(s-r,Z_{s-r}^{M^V},U^{M^V}_{s-r}) \nu(dr) ds = 
\int_0^T \nu([0,T-s]) g(s,Z^{M^{V}}_s,U^{M^{V}}_s) ds.
$$
Since the driver $h(s,Z_s,U_s) = \nu([0,T-s]) g(s,Z_s,U_s)$ satisfies the conditions 
of Corollary \ref{cor:BSDEBuck}, the BSDE
$$
Y_{t} = \xi + \int_{t}^{T}h(s,Z^{M}_s,U^{M}_s)ds+M_{T}-M_{t}
$$
has a unique solution in $\bbS^{2} \times \bbM^{2}_{0}$. The associated generator, 
$\tilde{F}_t(M) = \int_0^T h(s,Z^{M}_s,U^{M}_s)ds$, does not depend on $Y$ either.
So it also satisfies condition (S), and one obtains from Theorem \ref{thm:fix} that there exists a unique 
$V\in L^{2}(\scF_{T})^{d}$ satisfying \eqref{fixV}. This completes the proof.
\end{proof}

As special cases of Corollary \ref{cor:BSDEBuck} and Proposition 
\ref{prop:fullpath}, one obtains existence and uniqueness 
results for McKean--Vlasov type BSDEs with drivers depending on the realizations
$Y_s(\omega)$, $Z^M_s(\omega)$, $U^M_s(\omega)$ as well as the distributions 
${\cal L}(Y_s), {\cal L}(Z^M_s)$, ${\cal L}(U^M_s)$ of $Y_s$, $Z^M_s$ and $U^M_s$. 
We recall that if ${\cal M}({\cal X})$ is the set of all probability measures defined on 
the Borel $\sigma$-algebra of a normed vector space 
$({\cal X},\norm{\cdot})$, the $p$-Wasserstein metric on
${\cal M}_p({\cal X}) := \crl{\eta \in {\cal M}({\cal X}) : \int_{\cal X} \|x\|^p \eta(dx) <\infty}$
is given by
$$
{\cal W}_p(\eta, \eta') := \inf \crl{\int_{{\cal X} \times {\cal X}} \|x-x'\|^p \psi(dx, dx') : \psi \in {\cal M}_p({\cal X} \times {\cal X}) 
\text{ with marginals $\eta$ and $\eta'$}}^{1/p}.
$$

The following is a consequence of Corollary \ref{cor:BSDEBuck} and
generalizes the existence and uniqueness result for mean-field 
BSDEs of Buckdahn et al. (2009).

\begin{corollary} \label{cor:MKV}
Consider a BSDE of the form
\be \label{bsde:MKV}
Y_t=\xi+\int_t^Tf(s,Y_s,Z^M_s,U^M_s, {\cal L}(Y_s), {\cal L}(Z^M_s), {\cal L}(U^M_s))ds +M_T-M_t
\ee
for a terminal condition $\xi \in L^2(\scF_T)^d$ and a driver $
f$ from $[0,T] \times\Omega\times\bbR^d\times\bbR^{d\times
	n}\times L^2(E,{\cal B}(E),\mu;\bbR^d)\times
\scM_2(\mathbb{R}^d) \times\scM_2(\mathbb{R}^{d\times n})\times
\scM_2(L^2(E,{\cal B}(E),\mu;\bbR^d))$ to $\bbR^d$.
%$$
%f : [0,T] \times \Omega \times\bbR^d\times\bbR^{d\times n}\times L^2(E,{\cal B}(E),\mu;\bbR^d)\times
%\scM_2(\mathbb{R}^d) \times \scM_2(\mathbb{R}^{d\times n})\times\scM_2(L^2(E,{\cal B}(E),\mu;\bbR^d))\to\bbR^d.
%$$
Then \eqref{bsde:MKV} has a unique solution $(Y,M)$ in $\bbS^2\times\bbM^2_0$ if for fixed 
$$
(y,z,u,\eta,\zeta,\kappa) \in\bbR^d\times\bbR^{d\times n}\times L^2(E,{\cal B}(E),\mu;\bbR^d)\times
\scM_2(\mathbb{R}^d) \times \scM_2(\mathbb{R}^{d\times n}) \times\scM_2(L^2(E,{\cal B}(E),\mu;\bbR^d)),$$
$f(\cdot,y,z,u,\eta,\zeta, \kappa)$ is progressively measurable, and the following 
two conditions hold:
\begin{itemize}
\item[{\rm (i)}]
$\int_0^T \norm{f(t,0,0,0,{\cal L}(0),{\cal L}(0)),\scL(0)}_2 dt < \infty$
\item[{\rm (ii)}]
There exists a constant $C \ge 0$ such that
\begin{align*}
&|f(t,y,z,u,\eta,\zeta,\kappa)-f(t,y',z',u',\eta',\zeta',\kappa')|\\
&\leq 
C\brak{|y-y'|+|z-z'|+ \norm{u-u'}_{L^2(\mu)}
+\scW_2(\eta,\eta')+\scW_2(\zeta,\zeta')+\scW_2(\kappa,\kappa')}. 
\end{align*}
\end{itemize}
\end{corollary}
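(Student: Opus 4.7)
The plan is to deduce this corollary from Corollary \ref{cor:BSDEBuck} by absorbing the law dependence into an enlarged driver acting on random variables. Given the mean-field driver $f$ from the statement, define
\[
\tilde f(t,\omega,y,z,u) := f\bigl(t,\omega,y(\omega),z(\omega),u(\omega,\cdot),\scL(y),\scL(z),\scL(u)\bigr)
\]
for random variables $y \in L^2(\scF_T)^d$, $z \in L^2(\scF_T)^{d\times n}$ and $u \in L^2(\Omega \times E, \scF_T \otimes \scB(E), \bbP \otimes \mu; \bbR^d)$. With this choice the BSDE \eqref{bsde:MKV} becomes
$Y_t = \xi + \int_t^T \tilde f(s,Y_s,Z^M_s,U^M_s)\,ds + M_T - M_t$,
so it suffices to check that $\tilde f$ fulfills the hypotheses of Corollary \ref{cor:BSDEBuck}.

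For condition (i), the integrability $\int_0^T \norm{\tilde f(t,0,0,0)}_2 dt < \infty$ follows directly from hypothesis (i) of the present corollary since $\scL(0) = \delta_0$. Progressive measurability of $t \mapsto \tilde f(t, Y_t, Z^M_t, U^M_t)$ follows from the assumed progressive measurability of $f$ in its $(t,\omega)$ slots together with the fact that, by hypothesis (ii), $f$ is jointly continuous in the remaining arguments (with $\scM_2$ equipped with the Wasserstein topology), combined with the Borel measurability of $t \mapsto \scL(Y_t)$, $\scL(Z^M_t)$, $\scL(U^M_t)$. The latter is standard: e.g. for $Y \in \bbS^2$ the RCLL property and the $L^2$-dominating bound $\sup_t |Y_t| \in L^2$ give $\scW_2$-right-continuity of $t \mapsto \scL(Y_t)$, hence Borel measurability.

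The core of the argument is verifying condition (ii). The key observation is the coupling bound
\[
\scW_2\bigl(\scL(X),\scL(X')\bigr) \le \norm{X - X'}_2,
\]
valid in any of the three spaces considered, since the joint law of $(X,X')$ is an admissible coupling. Applying the pointwise Lipschitz bound from hypothesis (ii) at each $\omega$, taking $L^2(\bbP)$-norms, and observing that the three Wasserstein terms are deterministic, one obtains
\eq{
\norm{\tilde f(t,Y_t,Z_t,U_t) - \tilde f(t,Y'_t,Z'_t,U'_t)}_2
&\le C \bigl(\norm{Y_t - Y'_t}_2 + \norm{Z_t - Z'_t}_2 + \norm{U_t - U'_t}_{L^2(\bbP \otimes \mu)}\bigr) \\
&\quad + C \bigl(\scW_2(\scL(Y_t),\scL(Y'_t)) + \scW_2(\scL(Z_t),\scL(Z'_t)) + \scW_2(\scL(U_t),\scL(U'_t))\bigr) \\
&\le 2C \bigl(\norm{Y_t - Y'_t}_2 + \norm{Z_t - Z'_t}_2 + \norm{U_t - U'_t}_{L^2(\bbP \otimes \mu)}\bigr),
}
so condition (ii) of Corollary \ref{cor:BSDEBuck} holds with Lipschitz constant $2C$. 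An application of Corollary \ref{cor:BSDEBuck} to $\tilde f$ then yields the existence of a unique solution $(Y,M) \in \bbS^2 \times \bbM^2_0$ of \eqref{bsde:MKV}, completing the proof.

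No substantial obstacle arises: the proof is essentially a reduction, and the only mildly delicate point is the measurability check in step (i), which is handled by the coupling estimate combined with the pathwise regularity of $Y$, $Z^M$, and $U^M$.
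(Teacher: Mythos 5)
Your proposal is correct and follows essentially the same route as the paper: reduce to Corollary \ref{cor:BSDEBuck} by viewing the law-dependence as part of a functional driver, and verify its Lipschitz condition via the coupling bound $\scW_2(\scL(X),\scL(X')) \le \norm{X-X'}_2$, yielding the constant $2C$. The only cosmetic difference is the measurability check, where the paper invokes Lemma 4.51 of Aliprantis and Border (2006) on jointly measurable Carath\'eodory functions instead of arguing the regularity of $t \mapsto \scL(Y_t)$ directly.
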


\begin{proof}
It follows from the assumptions that the driver $f$ is progressively measurable in 
$(t,\omega)$ and continuous in $(y,z,u,\eta,\zeta, \kappa)$. Since
$$\bbR^d\times\bbR^{d\times n}\times L^2(E,{\cal B}(E),\mu;\bbR^d)\times
\scM_2(\mathbb{R}^d) \times \scM_2(\mathbb{R}^{d\times n})
\times\scM_2(L^2(E,{\cal B}(E),\mu;\bbR^d))$$ is a separable metric
space, one obtains from Lemma 4.51 of Aliprantis and Border (2006) that $f$ is jointly measurable 
in all its arguments. This implies that $f(t,Y_t,Z_t,U_t,{\cal L}(Y_t), {\cal L}(Z_t),\scL(U_t))$
is progessively measurable for every triple $(Y,Z,U) \in \bbS^2 \times \bbH^2 \times U \in L^2(\tilde{N})$.
It follows that condition (i) of Corollary \ref{cor:BSDEBuck} holds, and it just remains to show that
\begin{align*}
&\norm{f(t,Y_t,Z_t,U_t,{\cal L}(Y_t), {\cal L}(Z_t),\scL(U_t)) 
- f(t,Y'_t,Z'_t,U'_t,{\cal L}(Y'_t), {\cal L}(Z'_t),\scL(U'_t)}_2\\
&\leq D
\brak{\norm{Y_t-Y'_t}_2+\norm{Z_t -Z'_t}_2+ \norm{U_t -U'_t}_{L^2(\bbP \times \mu)}}
\end{align*}
for some constant $D$. But this is a consequence of condition (ii) since one has
$$
{\cal W}^2_2({\cal L}(Y_t), {\cal L}(Y'_t)) 
\le \int_{\mathbb{R}^d \times \mathbb{R}^d} |y-y'|^2 {\cal L}(Y_t,Y'_t)(dy,dy')
= \norm{Y_t - Y'_t}^2_2,
$$
and analogously, 
$$
{\cal W}^2_2({\cal L}(Z_t), {\cal L}(Z'_t)) \leq\norm{Z_t - Z'_t}^2_2, \quad
 \scW^2_2(\scL(U_t),\scL(U_t)) \leq \norm{U_t-U'_t}^2_{L^2(\bbP \times \mu)}.
$$
\end{proof}

Using the same arguments as in the proof of Corollary \ref{cor:MKV}, one obtains 
from Proposition \ref{prop:fullpath} the following result for time-delayed
McKean--Vlasov type BSDEs.

\begin{corollary} \label{cor:MKVpath}
Consider a BSDE of the form
\be \label{bsde:MKVpath}
Y_t= \xi + \int_t^T \int_0^s g(s-r, Z^M_{s-r} ,U^M_{s-r}, {\cal L}(Z^M_{s-r}), {\cal L}(U^M_{s-r}))
\nu(dr) ds + M_T-M_t
\ee
for a terminal condition $\xi \in L^2(\scF_T)^d$, a finite Borel measure $\nu$ on $[0,T]$
and a mapping
$$
g : [0,T] \times \Omega \times\bbR^{d\times n}\times L^2(E,{\cal B}(E),\mu;\bbR^d)\times
\scM_2(\mathbb{R}^{d \times n}) \times\scM_2(L^2(E,{\cal B}(E),\mu;\bbR^d))\to\bbR^d.
$$
Then \eqref{bsde:MKVpath} has a unique solution $(Y,M)$ in $\bbS^2\times\bbM^2_0$ if for fixed 
$$
(z,u,\zeta,\kappa) \in \bbR^{d\times n} \times L^2(E,{\cal B}(E),\mu;\bbR^d)\times
\scM_2(\mathbb{R}^{d\times n}) \times\scM_2(L^2(E,{\cal B}(E),\mu;\bbR^d)),$$
$g(\cdot,z,u,\zeta,\kappa)$ is progressively measurable, and the following 
two conditions hold:
\begin{itemize}
\item[{\rm (i)}]
$\int_0^T \norm{g(t,0,0,{\cal L}(0)),\scL(0)}_2 dt < \infty$
\item[{\rm (ii)}]
There exists a constant $C \ge 0$ such that
$$
|g(t,z,u,\zeta,\kappa)-g(t,z',u',\zeta',\kappa')| \leq 
C\brak{|z-z'|+ \norm{u-u'}_{L^2(\mu)} +\scW_2(\zeta,\zeta')+\scW_2(\kappa,\kappa')}. 
$$
\end{itemize}
\end{corollary}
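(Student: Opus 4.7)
The plan is to reduce the statement to a direct application of Proposition \ref{prop:fullpath}, mimicking the structure of the proof of Corollary \ref{cor:MKV}. The first step will be to verify joint measurability of $g$ in all its arguments. The Lipschitz condition (ii) forces continuity of $g$ in $(z,u,\zeta,\kappa)$; since $\bbR^{d\times n}\times L^2(E,{\cal B}(E),\mu;\bbR^d)\times\scM_2(\bbR^{d\times n})\times\scM_2(L^2(E,{\cal B}(E),\mu;\bbR^d))$ is a separable metric space, Lemma~4.51 of Aliprantis and Border (2006) upgrades progressive measurability in $(t,\omega)$ plus continuity in the remaining variables to joint measurability, exactly as in Corollary \ref{cor:MKV}. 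This will guarantee that the composed driver $\tilde g(t,Z,U) := g(t, Z_t, U_t, \scL(Z_t), \scL(U_t))$ is progressively measurable for every $(Z,U) \in \bbH^2 \times L^2(\tilde N)$.

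Next I will check that $\tilde g$ fulfils the hypotheses of Proposition \ref{prop:fullpath}. Condition (i) transfers immediately from the corollary's condition (i), since $\scL(0)$ is simply the Dirac mass at $0$. For condition (ii), I will combine the Lipschitz bound on $g$ with the elementary coupling estimate
$$\scW_2^2(\scL(Z_t), \scL(Z'_t)) \le \bbE |Z_t - Z'_t|^2 = \norm{Z_t - Z'_t}_2^2,$$
obtained by using $\scL(Z_t, Z'_t)$ as an admissible coupling, together with the analogous bound $\scW_2^2(\scL(U_t), \scL(U'_t)) \le \norm{U_t - U'_t}_{L^2(\bbP \otimes \mu)}^2$, to deduce
$$\norm{\tilde g(t,Z,U) - \tilde g(t,Z',U')}_2 \le 2C\brak{\norm{Z_t - Z'_t}_2 + \norm{U_t - U'_t}_{L^2(\bbP \otimes \mu)}}.$$
This is precisely the Lipschitz condition required by Proposition \ref{prop:fullpath}, which then yields a unique $(Y,M) \in \bbS^2 \times \bbM^2_0$ solving \eqref{bsde:MKVpath}.

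I do not expect any genuine obstacle: the corollary is essentially a transcription of Proposition \ref{prop:fullpath} composed with the law map. The only step deserving care is the joint measurability of $g$, which is handled verbatim as in the proof of Corollary \ref{cor:MKV} via the separability argument combined with continuity from the Lipschitz hypothesis.
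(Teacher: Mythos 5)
Your proposal is correct and follows exactly the route the paper intends: the paper gives no separate proof of this corollary, stating only that it follows from Proposition \ref{prop:fullpath} by the same arguments as in the proof of Corollary \ref{cor:MKV}, namely the joint-measurability argument via Lemma 4.51 of Aliprantis and Border and the coupling bound $\scW_2(\scL(Z_t),\scL(Z'_t)) \le \norm{Z_t-Z'_t}_2$ (and its analogue for $U$). Your derivation of the Lipschitz constant $2C$ for the composed driver and the transfer of condition (i) are both sound.
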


\setcounter{equation}{0}
\section{Existence of solutions to non-Lipschitz equations}
\label{sec:compact}

In this section we use compactness assumptions to derive
existence results for different BSEs and BSDEs with non-Lipschitz coefficients. 
To find compact sets in the space $L^2({\cal F}_T)^d$, 
we assume in all of Section \ref{sec:compact}
that the sample space $\Omega$ is an infinite-dimensional separable Hilbert space with
inner product $\ang{\cdot,\cdot}$ and corresponding norm $\norm{\cdot}$.
We fix a complete orthonormal system $e_j$, $j \in \bbN$, of $\Omega$ together with positive 
numbers $\lambda_j$, $j \in \bbN$ satisfying $\sum_{j \in \bbN} \lambda_j < \infty$. Then
$Qe_j := \lambda_j e_j$ defines a positive self-adjoint trace class operator $Q : \Omega \to \Omega$.
The mean zero Gaussian measure $\bbP$ with covariance $Q$ is the unique
probability measure on the Borel $\sigma$-algebra ${\cal B}(\Omega)$ of $\Omega$ under which the 
functions $\phi_j(\omega) = \ang{\omega,e_j}$, $j \in \mathbb{N}$, are
independent normal random variables with mean zero and variance $\lambda_j$, $j \in \mathbb{N}$;
see Da Prato (2006) for details. 
The map $e_j \mapsto \phi_j/\sqrt{\lambda_j}$ has a unique 
continuous linear extension $W : \Omega \to L^2(\Omega)$, called white noise mapping. It 
is an isometry between $\Omega$ and the closed subspace of $L^2(\Omega)$ generated by
$\phi_j$, $j \in \bbN$. 

To define the Sobolev space $W^{1,2}(\Omega)$ in $L^2(\Omega)$, let $\scE(\Omega)$ 
be the linear span of all real and imaginary parts of functions of the form $\omega \mapsto e^{i\ang{\omega,\eta}}$
for some $\eta \in \Omega$. For $\varphi \in \scE(\Omega)$, we denote by $D_j \varphi$ 
the derivative of $\varphi$ in the direction of $e_j$:
$$
D_j \varphi(\omega)= \lim_{\varepsilon \to 0} \frac{\varphi(\omega + \varepsilon e_j) - \varphi(\omega)}{\varepsilon}.
$$
The mapping $D : \scE(\Omega) \subseteq L^2(\Omega) \to L^2(\Omega;\Omega)$, 
$\varphi \mapsto D\varphi := \sum_{j \in \bbN} D_j \varphi e_j$
is closable. We maintain the notation $D$ for the closure of $D$ and denote its domain by
$W^{1,2}(\Omega)$. Endowed with the inner product
$$
\ang{\varphi,\psi}_{W^{1,2}} := \bbE\brak{\varphi\psi+\ang{D\varphi,D\psi}},
$$
the Sobolev space $W^{1,2}(\Omega)$ becomes a Hilbert space.
For $\varphi \in L^2(\Omega)^d$ and $\psi \in W^{1,2}(\Omega)^d$, we set
$$
\norm{\varphi}_2^2 :=\sum_{i=1}^d \bbE \varphi_i^2, \quad 
\norm{D\psi}^2_2 := \sum_{i=1}^d\bbE \ang{D\psi_i,D\psi_i} \quad 
\mbox{and} \quad \norm{\psi}_{W^{1,2}}^2
:= \norm{\psi}^2_2 + \norm{D \psi}^2_2.
$$
Theorem 10.25 of Da Prato (2006) shows that every $\varphi \in W^{1,2}(\Omega)^d$
satisfies the Poincar\'e inequality 
\be \label{Poinc}
\bbE |\varphi -\mathbb{E} \varphi|^2 \le \lambda \norm{D\varphi}^2_2 \quad
\mbox{for } \lambda := \max_j \lambda_j.
\ee
Moreover, by Theorem 10.16 of Da Prato (2006), every bounded set in $W^{1,2}(\Omega)^d$ is 
relatively compact in $L^2(\Omega)^d$. 

We say a function $\varphi : \Omega \to \mathbb{R}^d$ is $\omega$-Lipschitz with constant 
$L \ge 0$ if 
$$\abs{\varphi(\omega)-\varphi(\omega')} \le L \norm{\omega-\omega'} \quad \mbox{for all } 
\omega, \omega' \in \Omega.
$$
It follows from Propositon 10.11 of Da Prato (2006) that every $\omega$-Lipschitz 
function $\varphi : \Omega \to \mathbb{R}^d$ with constant $L$ belongs to $W^{1,2}(\Omega)^d$ 
with $\norm{D\varphi}_2 \le L$. In particular, one obtains that for given numbers $K,L \ge 0$,
the set of all $\omega$-Lipschitz $\varphi : \Omega \to \mathbb{R}^d$ with constant $L$ 
satisfying $|\mathbb{E}\varphi| \le K$ is compact in $L^2(\Omega)^d$. Moreover, the following holds:

\begin{lemma} \label{lemma:Lip}
Let $h : l^1 \to \mathbb{R}^d$ be a mapping satisfying 
$|h(x) - h(y)| \le K \norm{x-y}_1$ for some constant $K \ge 0$. 
Then for any $x \in l^2$, 
$$
\varphi = h \brak{\sqrt{\lambda_j} x_j W(e_j), j \in \bbN}
$$ 
is an $\omega$-Lipschitz random variable with constant $K \norm{x}_2$.
\end{lemma}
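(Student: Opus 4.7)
The plan is to unwind the definitions until the $\omega$-Lipschitz estimate reduces to two applications of the Cauchy--Schwarz inequality in $\ell^2$.

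First I would identify the argument of $h$ explicitly. By definition of the white noise mapping, $W(e_j)$ is the (almost sure) extension of $\omega \mapsto \phi_j(\omega)/\sqrt{\lambda_j} = \langle \omega, e_j\rangle/\sqrt{\lambda_j}$, so
\[
\sqrt{\lambda_j}\, x_j\, W(e_j)(\omega) = x_j \langle \omega, e_j\rangle
\]
for every $j$. Thus the proposed function is $\varphi(\omega) = h\bigl((x_j \langle \omega, e_j\rangle)_{j \in \mathbb{N}}\bigr)$.

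Second, I would check that this is well-defined, i.e. that $(x_j\langle\omega,e_j\rangle)_j \in \ell^1$ for every $\omega \in \Omega$. Since $\{e_j\}$ is a complete orthonormal system, Parseval gives $\sum_j \langle\omega,e_j\rangle^2 = \norm{\omega}^2 < \infty$, so $(\langle\omega,e_j\rangle)_j \in \ell^2$, and by Cauchy--Schwarz
\[
\sum_j |x_j \langle\omega,e_j\rangle| \le \norm{x}_2\, \norm{\omega} < \infty.
\]

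Third, I would combine the Lipschitz hypothesis on $h$ with the same inequality applied to $\omega - \omega'$:
\begin{align*}
|\varphi(\omega) - \varphi(\omega')|
&\le K \sum_j |x_j \langle \omega - \omega', e_j\rangle|\\
&\le K \norm{x}_2 \Bigl(\sum_j \langle \omega-\omega', e_j\rangle^2\Bigr)^{1/2}
= K \norm{x}_2 \norm{\omega-\omega'},
\end{align*}
which is exactly the claimed $\omega$-Lipschitz bound with constant $K\norm{x}_2$. There is no real obstacle here; the only subtlety worth being careful about is that $W(e_j)$ is a priori only defined as an element of $L^2(\Omega)$, so one must use the pointwise representative $\langle\omega,e_j\rangle/\sqrt{\lambda_j}$ (available on all of $\Omega$) before one can speak of an $\omega$-Lipschitz property, and justify that the resulting sequence lies in the domain $\ell^1$ of $h$.
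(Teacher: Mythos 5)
Your proof is correct and follows exactly the same route as the paper's (one-line) argument: apply the Lipschitz bound on $h$ to get the $\ell^1$-norm of $(x_j\ang{\omega-\omega',e_j})_j$ and then use Cauchy--Schwarz together with Parseval to bound it by $\norm{x}_2\norm{\omega-\omega'}$. The extra care you take in identifying the pointwise representative of $W(e_j)$ and checking that the argument of $h$ lies in $\ell^1$ is left implicit in the paper but is the right thing to verify.
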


\begin{proof}
One has
$|\varphi(\omega) - \varphi(\omega')|  \le K \norm{x_j \ang{\omega-\omega',e_j}, j \in \bbN}_1
\le  K \norm{x}_2 \norm{\omega-\omega'}$.
\end{proof}

\begin{Remark} \label{rem:BMPM}
The assumptions on $\Omega$ in this section are not restrictive for the purpose of studying BSEs and BSDEs.
For instance, they allow for probability spaces rich enough to support an
$n$-dimensional Brownian motion together with an independent Poisson random measure
on $[0,T] \times \bbR^m \setminus \crl{0}$. For an explicit construction, one can e.g., choose 
$\Omega$ to be of the form $\Omega = L^2([0,T];\bbR^n) \oplus l^2$,
where $L^2([0,T];\bbR^{n})$ is the space of square-integrable measurable functions from $[0,T]$ to $\bbR^n$ 
and $l^2$ the space of square-summable sequences. The inner product on 
$L^2([0,T];\bbR^n) \oplus l^2$ is given by
$$
\ang{(h,x), (h',x')} = \int_0^T h(s) \cdot h'(s) ds 
+ \sum_{j \in \bbN} x_j x'_j,
$$
where $\cdot$ denotes the standard scalar product on $\bbR^n$. Let 
$\mathbb{P}$ be a mean zero Gaussian measure corresponding to  
a positive self-adjoint trace class operator given by $Q e_j = \lambda_j e_j$
for a complete orthonormal system $(e_j)$ of $\Omega$ and positive numbers $(\lambda_j)$ 
satisfying $\sum_{j \in \bbN} \lambda_j < \infty$. If $W : \Omega \to L^2(\Omega)$ 
is the corresponding white noise mapping,
$b_i$ denotes the $i$-th unit vector in $\bbR^n$ and $(c_j)$ is a complete orthonormal system in $l^2$,
then $W^i_t := W(b_i 1_{[0,t]},0)$ defines an $n$-dimensional Brownian motion
independent of the sequence $\zeta_j := W(0,c_j)$ of independent standard normals.
For a given $\sigma$-finite measure $\mu$ on the Borel $\sigma$-algebra of
$\mathbb{R}^m \setminus \crl{0}$,
a Poisson random measure $N$ on $[0,T] \times \mathbb{R}^m \setminus \crl{0}$ with 
intensity measure $dt \mu(dx)$ can be realized as a function of $\zeta_j$, $j \in \bbN$.
Alternatively, $N$ can be realized with only $\zeta_{2j-1}$, $j \in \bbN$, and
$\zeta_{2j}$, $j \in \bbN$, can be used to model additional noise.
\end{Remark}

\subsection{Non-Lipschitz BSEs and BSDEs with path-dependent generators}
\label{subsec:NLBSE}

Denote by ${\cal F}$ the completion of the Borel $\sigma$-algebra 
${\cal B}(\Omega)$ with respect to $\bbP$, and let 
$\bbF = ({\cal F}_t)_{t \in [0,T]}$ be a general filtration satisfying the usual conditions.
The following theorem provides a general existence result for non-Lipschitz BSEs.
It uses the theorem of Krasnoselskii (1964), which combines the fixed point 
results of Banach and Schauder; for a textbook treatment, see e.g., Smart (1974).

\begin{theorem} \label{thm:contrcont}
Let $\xi \in L^2({\cal F}_T)^d$ and assume $F$ is of the form $F = F^1 + F^2$ for mappings 
$F^1, F^2 : \bbS^2 \times \bbM^2_0 \to \bbS^2_0$. Then the BSE \eqref{bse} has a 
solution $(Y,M) \in\bbS^2\times\bbM^2_0$ if there exist constants 
$C < 1$ and $R_1,R_2,R_3 \ge 0$ such that the following hold:

\begin{itemize}
\item[{\rm (i)}] 
$\norm{F(Y,M)-F(Y',M)}_{\bbS^2} \le C \norm{Y-Y'}_{\bbS^2}$
and $F(Y,M) \in \bbS^2_0$ is continuous in $M \in \bbM^2_0$
\item[{\rm (ii)}] 
$\norm{F^1_T(Y,M)-F^1_T(Y',M')}_2 \le C
\sqrt{\norm{Y_0-Y'_0}^2_2 + \norm{M -M'}^2_{\bbS^2}/4}$
\item[{\rm (iii)}]
For all $(Y,M) \in \bbS^2 \times \bbM^2_0$ satisfying
$\sqrt{\norm{Y_0}^2_2 + \norm{M}^2_{\bbS^2}/4} \le R_1$, one has
$F^2_T(Y,M) \in  W^{1,2}(\Omega )^d$ with $\norm{F^2_T(Y,M)}_2 \le R_2$ and 
$\norm{D F^2_T(Y,M)}_2 \le R_3$
\item[{\rm (iv)}]
$\norm{\xi}_2 + \norm{F_T^1(0,0)}_{2} + C R_1 + R_2 \le R_1$.
\end{itemize}
\end{theorem}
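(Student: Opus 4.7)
The plan is to reduce the BSE to a fixed point problem on $L^{2}(\scF_{T})^{d}$ via Theorem \ref{thm:fix}, decompose $G = G^{1} + G^{2}$ with
\[
G^{1}(V) := \xi + F^{1}_{T}(Y^{V},M^{V}), \qquad G^{2}(V) := F^{2}_{T}(Y^{V},M^{V}),
\]
and apply Krasnoselskii's theorem on the closed ball $K := \crl{V \in L^{2}(\scF_{T})^{d} : \norm{V}_{2} \le R_{1}}$. Condition (i) with $C<1$ together with Lemma \ref{lemma:LipY} (at $k=1$) gives that $F$ satisfies (S), so $G$ is well defined.

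The key arithmetic estimate, used throughout, comes from Doob's $L^{2}$-inequality combined with Pythagoras: since $y^{V}-y^{V'} = \bbE_{0}(V-V')$ and $M^{V}-M^{V'}$ has terminal value $\bbE_{0}(V-V')-(V-V')$, one gets $\norm{M^{V}-M^{V'}}_{\bbS^{2}} \le 2\norm{(V-V')-\bbE_{0}(V-V')}_{2}$, hence
\[
\norm{y^{V}-y^{V'}}_{2}^{2} + \tfrac{1}{4}\norm{M^{V}-M^{V'}}_{\bbS^{2}}^{2} \leq \norm{\bbE_{0}(V-V')}_{2}^{2} + \norm{(V-V')-\bbE_{0}(V-V')}_{2}^{2} = \norm{V-V'}_{2}^{2}.
\]
Substituting this into (ii) yields $\norm{G^{1}(V)-G^{1}(V')}_{2} \le C \norm{V-V'}_{2}$, so $G^{1}$ is a contraction on all of $L^{2}(\scF_{T})^{d}$; specializing to $V'=0$ with the zero processes also gives $\norm{F^{1}_{T}(Y^{V},M^{V})}_{2} \le \norm{F^{1}_{T}(0,0)}_{2} + CR_{1}$ for every $V \in K$.

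For $G^{2}$, the same bound shows that every $V \in K$ yields $\sqrt{\norm{y^{V}}_{2}^{2} + \norm{M^{V}}_{\bbS^{2}}^{2}/4} \le R_{1}$, so by (iii) the image $G^{2}(K)$ sits in a bounded subset of $W^{1,2}(\Omega)^{d}$, which is relatively compact in $L^{2}(\Omega)^{d}$ by Theorem 10.16 of Da Prato (2006). Continuity of $G^{2}$ on $K$ requires continuity of $V \mapsto (Y^{V},M^{V})$ in $\bbS^{2}\times\bbM^{2}_{0}$ together with continuity of $F^{2}_{T}=F_{T}-F^{1}_{T}$. The first follows from Doob's inequality for $M^{V}$ and a standard add-and-subtract argument for $Y^{V}$, using the estimate $(1-C)\norm{Y^{V_{n}}-Y^{V}}_{\bbS^{2}} \le \norm{y^{V_{n}}-y^{V}}_{2} + \norm{M^{V_{n}}-M^{V}}_{\bbS^{2}} + \norm{F(Y^{V},M^{V_{n}})-F(Y^{V},M^{V})}_{\bbS^{2}}$, in which the last term tends to zero by the $M$-continuity of $F$ in (i); the second is inherited from $F$ (via (i)) and the Lipschitz bound (ii) on $F^{1}_{T}$.

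Finally, invariance is immediate from (iv): for $V,V' \in K$,
\[
\norm{G^{1}(V)+G^{2}(V')}_{2} \le \norm{\xi}_{2} + \norm{F^{1}_{T}(0,0)}_{2} + CR_{1} + R_{2} \le R_{1}.
\]
Thus Krasnoselskii's theorem (see, e.g., Smart, 1974) produces a fixed point $V \in K$ of $G^{1}+G^{2}=G$, and Theorem \ref{thm:fix}(b) converts this into a solution $(Y,M)=\phi(V) \in \bbS^{2}\times\bbM^{2}_{0}$ of the BSE. The step I expect to be most delicate is the continuity of $G^{2}$ on $K$: conditions (i)--(iii) only supply the right ingredients (Lipschitz/continuity of $F$, Lipschitz of $F^{1}_{T}$, and a bound on $F^{2}_{T}$ on a sublevel set), and stitching them together into a genuine $L^{2}$-continuity statement for the composition $V \mapsto F^{2}_{T}(Y^{V},M^{V})$ needs the $C<1$ contraction estimate above in an essential way.
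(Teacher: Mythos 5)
Your proposal is correct and follows essentially the same route as the paper's proof: condition (i) with Lemma \ref{lemma:LipY} (at $k=1$) gives (S), the Pythagoras/Doob estimate $\norm{Y^V_0-Y^{V'}_0}^2_2+\norm{M^V-M^{V'}}^2_{\bbS^2}/4\le\norm{V-V'}^2_2$ drives both the contraction property of $G^1$ and the invariance of the ball, the continuity of $V\mapsto(Y^V,M^V)$ is obtained by the same add-and-subtract argument with the $(1-C)$ absorption, and compactness of $G^2(\mathcal C)$ comes from the $W^{1,2}$ bounds in (iii) before Krasnoselskii's theorem is applied. No gaps.
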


\begin{proof}
By Lemma \ref{lemma:LipY}, it follows from condition (i) that $F$ satisfies (S).
So by Theorem \ref{thm:fix}, it is enough to show that the mapping 
$V \mapsto G(V) = \xi + F_T(Y^V,M^V)$ has a fixed point in $L^2({\cal F}_T)^d$.
To do that we define ${\cal C} :=\crl{V \in L^2({\cal F}_T)^d :\norm{V}_2 \leq R_1}$, 
$G^1(V) :=\xi+F^1_T(Y^V,M^V)$, $G^2(V) :=F^2_T(Y^{V},M^{V})$ and show the
following: 1) $G^1$ is a contraction on $L^2({\cal F}_T)^d$; 2) $G^2$ is continuous with respect
to $\|.\|_2$; 3) $G^2$ maps $\scC$ into a compact subset of $L^2(\scF_T)^d$; and 4)
$G^1(V) + G^2(V') \in \scC$ for all $V, V' \in \scC$. Then it follows from 
Krasnoselskii's theorem that $G$ has a fixed point.

Step 1: $G^1: L^2(\scF_T)^d \to  L^2(\scF_T)^d$ is a contraction with respect to $\|.\|_2$:\\
It follows from (ii) that
$$ \norm{G^1(V)-G^1(V')}^2_2 = \norm{F_T^1(Y^V,M^V)-F_T^1(Y^{V'},M^{V'})}^2_{2}
\le C^2 \brak{ \norm{Y^V_0-Y^{V'}_0}^2_2 + \frac{1}{4} \norm{M^V-M^{V'}}^2_{\bbS^2}}.
$$
By Doob's $L^2$-maximal inequality, one has $\norm{M^V-M^{V'}}_{\bbS^2} \le 2 \norm{M_T^V-M_T^{V'}}_2$.
Therefore,
$$
\norm{G^1(V)-G^1(V')}^2_2 \le
C^2 \brak{\norm{\bbE_0(V-V')}^2_2 + \norm{M^V_T-M_T^{V'}}^2_2}
\leq C^2 \norm{V-V'}^2_2, 
$$
which shows that $G^1$ is a contraction.

Step 2: $G^2 :  L^2(\scF_T)^d \to  L^2(\scF_T)^d$ is continuous with respect to $\|.\|_2$:\\
By Doob's $L^2$-maximal inequality, $V \mapsto M^V$ is a continuous mapping from $L^2({\cal F}_T)^d$ 
to $\bbM^2_0$. Moreover, since 
$$
Y^V_t = \hat{M}^V_t - F_t(Y^V,M^V) \quad \mbox{for}\quad  \hat{M}^V_t := \bbE_t V = \bbE_0 V - M^V_t,
$$
one obtains from the first part of condition (i) that
\beas
&& \norm{Y^V - Y^{V'}}_{\bbS^2} \le \norm{\hat{M}^V - \hat{M}^{V'}}_{\bbS^2}
+ \norm{F(Y^V, M^V) - F(Y^{V'}, M^{V'})}_{\bbS^2}\\
&\le& 2 \norm{V - V'}_2
+ \norm{F(Y^V, M^V) - F(Y^V, M^{V'})}_{\bbS^2} 
+ \norm{F(Y^V, M^{V'}) - F(Y^{V'}, M^{V'})}_{\bbS^2}\\
&\le& 2\norm{V - V'}_2 + \norm{F(Y^V, M^V) - F(Y^V, M^{V'})}_{\bbS^2} + C \norm{Y^V- Y^{V'}}_{\bbS^2}.
\eeas
Therefore,
$$
(1-C) \norm{Y^V - Y^{V'}}_{\bbS^2} \le 
2 \norm{V - V'}_2
+ \norm{F(Y^V, M^V) - F(Y^V, M^{V'})}_{\bbS^2},
$$
and it follows from the second part of (i) 
that $V \mapsto Y^V$ is continuous from $L^2({\cal F}_T)^d$ to $\bbS^2$.
Since $F^2 = F - F^1$, one obtains from (i) and (ii) 
that $(Y^V, M^V) \mapsto F^2_T(Y^V, M^V)$ is continuous from $\bbS^2 \times \bbM^2_0$
to $L^2({\cal F}_T)^d$. This proves the continuity of $G^2$.

Step 3: $G^2(\scC)$ is contained in a compact subset of $L^2(\scF_T)^d$:\\
For $V \in {\cal C}$, one has 
\be \label{Vest}
\norm{Y^V_0}^2_2 + \frac{1}{4} \norm{M^V}^2_{\bbS^2}
\le \norm{\bbE_0 V}^2_2 + \norm{M^V_T}^2_2 = \norm{V}^2_2 \le R^2_1.
\ee
So it follows from (iii) that $F^2_T(Y^V,M^V)$ is in $W^{1,2}(\Omega)^d$ with $\norm{F^2_T(Y^V,M^V)}_2 \le R_2$ and 
\linebreak $\norm{D F^2_T(Y^V,M^V)}_2 \le R_3$. Since bounded subsets of 
$W^{1,2}(\Omega)^d$ are relatively compact in $L^2(\Omega)^d$, this shows that 
$G^2(\scC)$ is contained in a compact subset of $L^2(\scF_T)^d$.

Step 4: $G^1(V)+G^2(V')\in\scC$ for all $V,V'\in\scC$:\\
If $V \in {\cal C}$, one obtains from \eqref{Vest} that 
$\norm{Y^V_0}^2_2 + \norm{M^V}^2_{\bbS^2}/4 \le R^2_1$.
So it follows from (ii) that
\beas
\norm{G^1(V)}_2 &\le& \norm{\xi}_2+\norm{F^1_T(Y^V,M^V)}_{2}
\leq\norm{\xi}_2+\norm{F^1_T(0,0)}_{2}+ C (\norm{Y_0^V}^2_2 + \norm{M^V}^2_{\bbS^2}/4)^{1/2}\\
&\le& \norm{\xi}_2 + \norm{F_T^1(0,0)}_{2} + C R_1.
\eeas
By (iii), one has $\norm{G^2(V')}_2 \le R_2$. Therefore, one obtains from (iv) that
$\norm{G^1(V) + G^2(V')}_2 \le R_1$.

So Krasnoselskii's theorem applies, and one can conclude 
that $G$ has a fixed point in $L^2({\cal F}_T)^d$.
\end{proof}

Assumption (i) of Theorem \ref{thm:contrcont} is needed to ensure that condition (S) holds and
$F^2_T(Y, M)$ is continuous in $(Y, M)$. In the following special case it is not needed.

\begin{proposition} \label{prop:contrcont} 
Let $\xi \in L^2({\cal F}_T)^d$ and assume $F$ is of the form 
$F(Y,M) = F^1(Y_0,M) + F^2(Y_0,M)$ for mappings 
$F^1, F^2 : L^2({\cal F}_0)^d \times \bbM^2_0 \to \bbS^2_0$. 
Then the BSE \eqref{bse} has a solution $(Y,M) \in \bbS^2\times\bbM^2_0$ if there 
exist a constant $C <1$ and a nondecreasing function $\rho : \mathbb{R}_+ \to \mathbb{R}_+$ 
satisfying
\be \label{1-C}
\limsup_{x \to \infty} \frac{\rho(x)}{x} < 1-C
\ee
such that the following two conditions hold:
\begin{itemize}
\item[{\rm (i)}]
$\norm{F^1_T(Y_0,M)-F^1_T(Y'_0,M')}_2 \le 
C \sqrt{\norm{Y_0-Y'_0}^2_2 + \norm{M -M'}^2_{\bbS^2}/4}$
\item[{\rm (ii)}] 
$F^2_T : L^2(\scF_0)^d \times \bbM^2_0 \to L^2(\scF_T)^d$ is continuous and takes values
in $W^{1,2}(\Omega)^d$ with 
$$
|\mathbb{E} F^2_T(Y_0,M)|^2 + \lambda \norm{D F^2_T(Y_0,M)}^2_2
\le \rho^2 \brak{\sqrt{\norm{Y_0}^2_2 + \norm{M}^2_{\bbS^2}/4}}.
$$
\end{itemize}
\end{proposition}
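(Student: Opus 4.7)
The plan is to apply Krasnoselskii's fixed point theorem to the map $G$ of Theorem~\ref{thm:fix}, split as $G = G^1 + G^2$ with $G^1(V) := \xi + F^1_T(y^V, M^V)$ and $G^2(V) := F^2_T(y^V, M^V)$. Since $F(Y,M) = F^1(Y_0, M) + F^2(Y_0, M)$ depends on $Y$ only through $Y_0$, the SDE \eqref{sde} is explicitly solved by $Y_t = y - F^1_t(y,M) - F^2_t(y,M) - M_t \in \bbS^2$, so condition {\rm (S)} holds trivially and Theorem~\ref{thm:fix} reduces the problem to producing a fixed point of $G$ in $L^2(\scF_T)^d$.

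The contraction bound for $G^1$ is identical to Step~1 of the proof of Theorem~\ref{thm:contrcont}: using (i), the identity $y^V - y^{V'} = \bbE_0(V-V')$, and Doob's $L^2$-maximal inequality $\norm{M^V - M^{V'}}_{\bbS^2} \le 2\norm{M^V_T - M^{V'}_T}_2$, one obtains $\norm{G^1(V) - G^1(V')}_2 \le C\norm{V - V'}_2$. Continuity of $G^2$ follows because $V \mapsto (y^V, M^V)$ is continuous from $L^2(\scF_T)^d$ into $L^2(\scF_0)^d \times \bbM^2_0$ (again by Doob) and $F^2_T$ is continuous by (ii).

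For compactness and invariance on the closed ball $\scC := \crl{V \in L^2(\scF_T)^d : \norm{V}_2 \le R}$, the basic estimate is $\norm{y^V}_2^2 + \norm{M^V}_{\bbS^2}^2/4 \le \norm{\bbE_0 V}_2^2 + \norm{M^V_T}_2^2 = \norm{V}_2^2$. Combining the Poincar\'e inequality \eqref{Poinc} with assumption (ii) yields
\[
\norm{F^2_T(y^V, M^V)}_2^2 \le \abs{\bbE F^2_T(y^V, M^V)}^2 + \lambda \norm{DF^2_T(y^V,M^V)}_2^2 \le \rho^2(\norm{V}_2),
\]
together with $\norm{D F^2_T(y^V,M^V)}_2 \le \rho(\norm{V}_2)/\sqrt{\lambda}$, so $G^2(\scC)$ is a bounded subset of $W^{1,2}(\Omega)^d$ and hence relatively compact in $L^2(\scF_T)^d$ by Theorem~10.16 of Da Prato (2006). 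The contraction bound and (i) give $\norm{G^1(V)}_2 \le \norm{\xi}_2 + \norm{F^1_T(0,0)}_2 + CR$, so the invariance condition $G^1(V) + G^2(V') \in \scC$ amounts to $\norm{\xi}_2 + \norm{F^1_T(0,0)}_2 + CR + \rho(R) \le R$. Hypothesis \eqref{1-C} provides $\limsup_{R\to\infty}\rho(R)/R < 1-C$, hence $(1-C)R - \rho(R) \to \infty$, and such an $R$ can be chosen. Krasnoselskii's theorem then produces a fixed point of $G$ in $\scC$, and Theorem~\ref{thm:fix} yields the claimed solution $(Y,M) \in \bbS^2 \times \bbM^2_0$.

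The main subtlety lies in the compactness step: assumption (ii) only controls $F^2_T$ in a Sobolev-type norm, and it is the Poincar\'e inequality \eqref{Poinc} together with the compact embedding $W^{1,2}(\Omega)^d \hookrightarrow L^2(\Omega)^d$ that upgrades this to genuine relative compactness of $G^2(\scC)$ in $L^2(\scF_T)^d$; the sublinearity hypothesis \eqref{1-C} is then precisely what is needed to close the invariance inequality once the Poincar\'e-Sobolev estimate has been applied.
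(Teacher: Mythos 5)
Your proof is correct and follows essentially the same route as the paper: the same decomposition $G = G^1 + G^2$, the same contraction estimate for $G^1$ via Doob's inequality and the orthogonality $\norm{V}_2^2 = \norm{\bbE_0 V}_2^2 + \norm{M^V_T}_2^2$, the same Poincar\'e-plus-compact-embedding argument for relative compactness of $G^2(\scC)$, and the same choice of a large enough radius $R$ via \eqref{1-C} before invoking Krasnoselskii's theorem.
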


\begin{proof}
Since $F$ only depends on $Y_0$ and $M$, condition (S) holds trivially.
By Theorem \ref{thm:fix}, the proposition follows if we can show that
$V \mapsto G(V) = \xi + F_T(Y_0^V,M^V)$ has a fixed point in $L^2({\cal F}_T)^d$.
To do that, we fix a constant $R_1 \ge 0$ and define ${\cal C}$, $G^1$ and $G^2$ as in the proof of 
Theorem \ref{thm:contrcont}. Then one obtains from (i) like in the proof of Theorem \ref{thm:contrcont}
that $G^1$ is a contraction on $L^2({\cal F}_T)^d$. Condition (ii) implies that $G^2$ is continuous with 
respect to $\|.\|_2$, and since 
$$
\rho^2 \brak{\sqrt{\norm{Y^V_0}^2_2 + \norm{M^V}^2_{\bbS^2}/4}}
\le \rho^2 \brak{\sqrt{\norm{Y^V_0}^2_2 + \norm{M^V_T}^2_2}} = \rho^2 (\norm{V}_2),
$$
that $G^2({\cal C})$ is relatively compact in $L^2({\cal F}_T)^d$. Due to \eqref{1-C}, one has
$$
\norm{\xi}_2 + \norm{F_T^1(0,0)}_{2} + C R_1 + \rho(R_1) \le R_1
$$
if $R_1$ is chosen large enough. Then for $V,V' \in {\cal C}$, 
\beas
\norm{G^1(V)}_2 &\le& \norm{\xi}_2 + \norm{F^1_T(Y_0^V, M^V)}_2
\le \norm{\xi}_2 + \norm{F^1_T(0,0)}_2 + C(\norm{Y_0^V}^2_2 + \norm{M^V_T}^2_2)^{1/2}\\
&\le& \norm{\xi}_2 + \norm{F^1_T(0,0)}_2 + CR_1,
\eeas
and, by Poincar\'e's inequality,
\beas
\norm{G^2(V')}^2_2 &\le& |\mathbb{E} F^2_T(Y_0^{V'},M^{V'})|^2 + \lambda 
\norm{DF^2_T(Y_0^{V'},M^{V'})}_2^2 \le
\rho^2 \brak{\sqrt{\|Y_0^{V'}\|^2_2 + \|M^{V'}\|^2_{\bbS^2}/4}}\\
&\le& \rho^2 \brak{\sqrt{\|Y_0^{V'}\|^2_2 + \|M^{V'}_T\|^2_2}} = \rho^2(\norm{V'}_2).
\eeas
Therefore,
$$
\norm{G^1(V) + G^2(V')}_2 \le \norm{\xi}_2 + \norm{F^1_T(0,0)}_2 + CR_1
+ \rho(R_1) \le R_1,
$$
and it follows from Krasnoselskii's theorem that $G$ has a fixed point in 
$L^2({\cal F}_T)^d$.
\end{proof}

As a consequence of Proposition \ref{prop:contrcont} one obtains an existence result for BSDEs 
\be \label{genbsde}
Y_t = \xi + \int_t^T f(s,Y_0,M) ds + M_T - M_t
\ee
with drivers $f$ depending on $Y_0$ and the whole martingale $M$.

\begin{corollary} \label{cor:YM}
Let $\xi \in L^2({\cal F}_T)^d$ and assume $f$ to be of the form
$f = f^1 + f^2$ for mappings $f^1,f^2 : [0,T] \times \Omega \times L^2(\scF_0)^d\times \bbM^2_0\to\bbR^d$.
Then the BSDE \eqref{genbsde} has a solution $(Y,M) \in \bbS^2 \times \bbM^2_0$ 
if there exist a constant $C < T^{-1}$ and a nondecreasing function 
$\rho : \mathbb{R}_+ \to \mathbb{R}_+$ satisfying
$$
\limsup_{x \to \infty} \frac{\rho(x)}{x} < 1-C T
$$
such that the following two conditions hold:
\begin{itemize}
\item[{\rm (i)}]
For all $(Y_0, M) \in L^2(\scF_0)^d \times \bbM^2_0$, $f^1(\cdot,Y_0,M)$ is 
progressively measurable with $\int_0^T |f^1(t,0,0) | dt \in L^2({\cal F}_T)$, and 
$$
\norm{f^1(t,Y_0,M)-f^1(t,Y'_0,M')}_2 \le 
C \sqrt{\norm{Y_0-Y'_0}^2_2 + \norm{M -M'}^2_{\bbS^2}/4}.
$$
\item[{\rm (ii)}] 
For all $(Y_0, M) \in L^2(\scF_0)^d \times \bbM^2_0$, $f^2(.,Y_0,M)$ is 
progressively measurable with $\int_0^T|f^2(t,Y_0,M)|dt \in L^2({\cal F}_T)$, and 
$J(Y_0,M) := \int_0^T f^2(t,Y_0,M) dt$ defines a continuous mapping 
$J : L^2({\cal F}_0)^d \times \bbM^2_0 \to L^2({\cal F}_T)^d$ with values in 
$W^{1,2}(\Omega)^d$ such that 
$$
|\mathbb{E} J(Y_0,M)|^2 + \lambda \norm{DJ(Y_0,M)}^2_2 
\le \rho^2 \brak{\sqrt{\norm{Y_0}^2_2 + \norm{M_T}^2_{\bbS^2}/4}}.
$$
\end{itemize}
\end{corollary}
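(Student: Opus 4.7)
The plan is to deduce Corollary \ref{cor:YM} as a direct application of Proposition \ref{prop:contrcont} to the generator
\[
F(Y,M) = F^1(Y_0,M) + F^2(Y_0,M), \qquad F^i_t(Y_0,M) := \int_0^t f^i(s,Y_0,M)\, ds, \quad i=1,2,
\]
which only depends on $Y$ through $Y_0$, so that the setting of Proposition \ref{prop:contrcont} applies. I will check each hypothesis of that proposition in turn.

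First I would verify that $F^1$ and $F^2$ are well-defined maps from $L^2(\scF_0)^d \times \bbM^2_0$ into $\bbS^2_0$. Using the same duality argument as at the beginning of the proof of Proposition \ref{prop:genBSDE} (together with the integrability assumptions on $\int_0^T|f^1(t,0,0)|\,dt$ in~(i), the Lipschitz estimate in~(i), and the assumption in~(ii) that $\int_0^T|f^2(t,Y_0,M)|\,dt$ belongs to $L^2(\scF_T)$), one sees that both $F^1_t(Y_0,M)$ and $F^2_t(Y_0,M)$ define processes in $\bbS^2_0$. Their paths start at $0$ by construction.

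Next, condition~(i) of Proposition \ref{prop:contrcont}: by Minkowski's integral inequality and the Lipschitz bound in~(i),
\[
\norm{F^1_T(Y_0,M)-F^1_T(Y'_0,M')}_2 \le \int_0^T \norm{f^1(s,Y_0,M)-f^1(s,Y'_0,M')}_2\,ds \le CT \sqrt{\norm{Y_0-Y'_0}^2_2 + \norm{M-M'}^2_{\bbS^2}/4}.
\]
Setting $\tilde C := CT$, the hypothesis $C<T^{-1}$ gives $\tilde C<1$, so the Lipschitz hypothesis of Proposition \ref{prop:contrcont}(i) is satisfied with constant $\tilde C$.

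For condition~(ii) of Proposition \ref{prop:contrcont}, note that $F^2_T(Y_0,M) = J(Y_0,M)$. The continuity of $J: L^2(\scF_0)^d \times \bbM^2_0 \to L^2(\scF_T)^d$, the fact that it takes values in $W^{1,2}(\Omega)^d$, and the bound $|\mathbb{E} J(Y_0,M)|^2 + \lambda\norm{DJ(Y_0,M)}^2_2 \le \rho^2(\sqrt{\norm{Y_0}^2_2 + \norm{M_T}^2_{\bbS^2}/4})$ are exactly what is assumed in~(ii). The growth condition $\limsup_{x\to\infty}\rho(x)/x < 1-CT = 1-\tilde C$ then matches~\eqref{1-C} for the constant $\tilde C$. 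Consequently all assumptions of Proposition \ref{prop:contrcont} are verified, and the BSE obtained from the chosen $F$ has a solution $(Y,M)\in\bbS^2\times\bbM^2_0$; rewriting the BSE in integral form yields the BSDE \eqref{genbsde} and completes the proof. The only mildly subtle point is the passage from the pointwise Lipschitz bound on $f^1$ to the $L^2$ Lipschitz bound on $F^1_T$, which is handled by Minkowski's inequality and accounts for the factor $T$ that produces the required condition $C<T^{-1}$.
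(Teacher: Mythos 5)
Your proposal is correct and follows essentially the same route as the paper: define $F^i_t(Y_0,M)=\int_0^t f^i(s,Y_0,M)\,ds$, check these land in $\bbS^2_0$, use Minkowski's integral inequality to convert the time-pointwise $L^2$-Lipschitz bound on $f^1$ into the bound $\norm{F^1_T(Y_0,M)-F^1_T(Y_0',M')}_2\le CT\,(\norm{Y_0-Y_0'}_2^2+\norm{M-M'}_{\bbS^2}^2/4)^{1/2}$, observe that $F^2_T=J$ so condition (ii) transfers verbatim, and invoke Proposition \ref{prop:contrcont} with $CT$ in place of $C$. No gaps.
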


\begin{proof}
It follows from the assumptions that for all $Y_0$ and $M$, 
$F^i_t(Y_0,M) = \int_0^t f^i(s,Y_0,M) ds$ belongs to $\bbS^2_0$ for $i=1,2$, and
$$
\bbE \abs{F^1_T(Y_0,M) - F^1_T(Y',M')}^2
\le C^2T^2 \brak{\norm{Y_0 - Y'_0}_2^2 + \norm{M - M'}^2_{\bbS^2}/4}.
$$
So the conditions of Proposition \ref{prop:contrcont} hold with 
$C T$ instead of $C$, and the corollary follows.
\end{proof}

If $F$ does not depend on $Y$, the assumptions of Theorem \ref{thm:contrcont}
can be relaxed further, and one obtains the following

\begin{theorem} \label{thm:FM}
Let $\xi \in L^2({\cal F}_T)^d$ and assume $F$ is of the form $F(Y,M) = F^1(M) + F^2(M)$ 
for mappings $F^1, F^2 : \bbM^2_0 \to \bbS^2_0$. Then the BSE \eqref{bse} has a solution 
$(Y,M) \in \bbS^2 \times \bbM^2_0$ if there exist a constant $C <1/2$ and a 
nondecreasing function $\rho : \mathbb{R}_+ \to \mathbb{R}_+$ satisfying
\be \label{rhoC}
\limsup_{x \to \infty} \frac{\rho(x)}{x} < \frac{1/2-C}{\sqrt{\lambda}}
\ee
such that the following two conditions hold:
\begin{itemize}
\item[{\rm (i)}]
$\norm{F^1_T(M) - \bbE_0F^1_T(M) - (F^1_T(M') -\bbE_0F^1_T(M'))}_2 
\le C \norm{M -M'}_{\bbS^2}$
\item[{\rm (ii)}] 
$F^2_T : \bbM^2_0 \to L^2(\scF_T)^d$ is continuous and
takes values in $W^{1,2}(\Omega)^d$ with $\norm{DF^2_T (M)}_2 \le \rho(\norm{M}_{\bbS^2})$.
\end{itemize}
\end{theorem}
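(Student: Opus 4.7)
The plan is to reduce the problem via Corollary \ref{cor:fix} to finding a fixed point of
$G_0 : L^2_0(\scF_T)^d \to L^2_0(\scF_T)^d$, and then to argue in the spirit of Theorem \ref{thm:contrcont}, replacing the Lipschitz splitting by the centered splitting dictated by condition (i). Writing $F = F^1 + F^2$, I would decompose $G_0 = G^1_0 + G^2_0$ with
\[
G^1_0(V) = \xi - \bbE_0 \xi + F^1_T(M^V) - \bbE_0 F^1_T(M^V), \qquad G^2_0(V) = F^2_T(M^V) - \bbE_0 F^2_T(M^V),
\]
and seek a fixed point of $G_0$ in a closed ball $\scC := \crl{V \in L^2_0(\scF_T)^d : \norm{V}_2 \le R}$ for sufficiently large $R$, via Krasnoselskii's theorem.

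\textbf{Contraction of $G^1_0$.} For $V \in L^2_0(\scF_T)^d$ one has $M^V_t = -\bbE_t V$, so $M^V_T - M^{V'}_T = -(V-V')$, and Doob's $L^2$-maximal inequality gives $\norm{M^V - M^{V'}}_{\bbS^2} \le 2 \norm{V-V'}_2$. Condition (i) therefore yields $\norm{G^1_0(V) - G^1_0(V')}_2 \le C \norm{M^V - M^{V'}}_{\bbS^2} \le 2C \norm{V-V'}_2$, which is a strict contraction since $2C < 1$.

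\textbf{Continuity and compactness of $G^2_0$.} Continuity follows from $V \mapsto M^V$ being continuous from $L^2_0$ to $\bbM^2_0$, the assumed continuity of $F^2_T$ into $L^2(\scF_T)^d$, and the $L^2$-boundedness of $I - \bbE_0$. The key point for compactness is that $\bbE F^2_T(M^V)$ is a deterministic vector in $\bbR^d$ and is killed by $I - \bbE_0$, so $G^2_0(V) = (I - \bbE_0) A_V$ with $A_V := F^2_T(M^V) - \bbE F^2_T(M^V)$. For $V \in \scC$ one has $\norm{M^V}_{\bbS^2} \le 2R$, hence by Poincar\'e's inequality \eqref{Poinc} and (ii),
\[
\norm{A_V}_2 \le \sqrt{\lambda}\, \norm{DF^2_T(M^V)}_2 \le \sqrt{\lambda}\,\rho(2R), \qquad \norm{DA_V}_2 \le \rho(2R).
\]
Thus $\crl{A_V : V \in \scC}$ is bounded in $W^{1,2}(\Omega)^d$ and hence relatively compact in $L^2(\Omega)^d$ by Theorem 10.16 of Da Prato (2006); applying the continuous linear operator $I - \bbE_0$ preserves relative compactness, so $G^2_0(\scC)$ is relatively compact in $L^2(\scF_T)^d$.

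\textbf{Stability of the ball and conclusion.} Using $\norm{(I - \bbE_0)X}_2 \le \norm{X}_2$, the bound on $A_{V'}$ yields $\norm{G^2_0(V')}_2 \le \sqrt{\lambda}\,\rho(2R)$, while the contraction estimate gives $\norm{G^1_0(V)}_2 \le 2CR + c_0$ with $c_0 := \norm{G^1_0(0)}_2$, which is finite. Hence $\norm{G^1_0(V) + G^2_0(V')}_2 \le R$ whenever $2CR + c_0 + \sqrt{\lambda}\,\rho(2R) \le R$. Condition \eqref{rhoC} ensures $\limsup_{R\to\infty}\sqrt{\lambda}\,\rho(2R)/R = 2\sqrt{\lambda}\,\limsup_{x\to\infty}\rho(x)/x < 1 - 2C$, so this inequality holds for all sufficiently large $R$. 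Krasnoselskii's theorem then produces a fixed point of $G_0 = G^1_0 + G^2_0$ in $\scC$, and Corollary \ref{cor:fix} translates it into a solution $(Y,M) \in \bbS^2 \times \bbM^2_0$ of the BSE.

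The main subtlety is the compactness step: condition (ii) controls only $\norm{DF^2_T(M)}_2$ and says nothing about $\norm{F^2_T(M)}_2$, so the identity $G^2_0(V) = (I - \bbE_0)(F^2_T(M^V) - \bbE F^2_T(M^V))$ — exploiting that deterministic constants are annihilated by $I - \bbE_0$ — is what makes Poincar\'e applicable and yields a bounded-in-$W^{1,2}$ family after centering. The factor $\sqrt{\lambda}$ coming from Poincar\'e's inequality is exactly what makes the growth condition \eqref{rhoC} sharp.
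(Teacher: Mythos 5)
Your proof is correct and follows essentially the same route as the paper: reduction via Corollary \ref{cor:fix} to a fixed point of $G_0$ on $L^2_0(\scF_T)^d$, the same splitting $G_0 = G^1_0 + G^2_0$, the contraction constant $2C$ from Doob's inequality, Poincar\'e plus the $W^{1,2}$ compact embedding for $G^2_0$, and Krasnoselskii. Your explicit centering $G^2_0(V) = (I-\bbE_0)\bigl(F^2_T(M^V) - \bbE F^2_T(M^V)\bigr)$ is a nice clarification of the compactness step that the paper leaves implicit, but it is not a different argument.
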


\begin{proof}
By Corollary \ref{cor:fix}, it is enough to show that the mapping 
$$V \mapsto G_0(V) = \xi - \bbE_0 \xi + F_T(M^V) - \bbE_0 F_T(M^V)$$
has a fixed point in $L^2_0({\cal F}_T)^d$. For a given constant $R \ge 0$,
define ${\cal C} :=\crl{V \in L^2_0({\cal F}_T)^d :\norm{V}_2 \leq R}$, 
$G^1_0(V) := \xi - \bbE_0\xi + F^1_T(M^V) - \bbE_0 F^1_T(M^V)$ and
$G^2_0(V) := F^2_T(M^{V}) - \bbE_0 F^2_T(M^{V})$. By (i) and Doob's $L^2$-maximal 
inequality, one has
\beas
&& \norm{G^1_0(V)-G^1_0(V')}_2 \le \norm{F^1_T(M^V) - \bbE_0F^1_T(M^V) - (F^1_T(M^{V'}) -\bbE_0F^1_T(M^{V'}))}_2\\
&\le& C \norm{ M^V-M^{V'}}_{\bbS^2} \le 2C \norm{M^V_T-M^{V'}_T}_2 \le 2C \norm{V-V'}_2.
\eeas
So $G^1_0$ is a contraction on $L^2_0({\cal F}_T)^d$. Moreover, it follows from (ii) that 
$G^2_0: L^2_0(\scF_T)^d \to  L^2_0(\scF_T)^d$ is continuous and 
$G^2_0({\cal C})$ is relatively compact in $L^2_0(\scF_T)^d$. 
Finally, let $V,V' \in {\cal C}$. Then 
$$
\norm{G^1_0(V)}_2 \le \norm{\xi - \bbE_0 \xi}_2 + \norm{F^1_T(0) - \bbE_0 F^1_T(0)}_2 + 2 C R,
$$
and 
$$
\norm{G^2_0(V')}_2 = \norm{F^2_T(M^{V'}) - \bbE_0 F^2_T(M^{V'})}_2
\le \sqrt{\lambda} \norm{D F^2_T(M^{V'})}_2 \le \sqrt{\lambda} \rho \brak{\norm{M^{V'}}_{\bbS^2}}
\le \sqrt{\lambda} \rho(2 R).
$$
By \eqref{rhoC}, one has $G^1_0(V) + G^2_0(V') \in {\cal C}$ for $R$ large enough.
So it follows like in the proof of Theorem \ref{thm:contrcont}
from Krasnoselskii's theorem that $G_0 = G^1_0+G^2_0$ has a 
fixed point in $L^2_0({\cal F}_T)^d$.
\end{proof}

\begin{corollary} \label{corcor:Lip}
A BSDE of the form
$$
Y_t = \xi + \int_t^T (f^1(s,M) + f^2(s,M)) ds + M_T - M_t
$$
for a terminal condition $\xi \in L^2({\cal F}_T)^d$ and mappings 
$f^1,f^2 : [0,T] \times \Omega \times \bbM^2_0\to\bbR^d$ 
has a solution $(Y,M) \in \bbS^2 \times \bbM^2_0$ 
if  there exist a constant $C < (2T)^{-1}$ and a nondecreasing function 
$\rho : \mathbb{R}_+ \to \mathbb{R}_+$ satisfying
$$
\limsup_{x \to \infty} \frac{\rho(x)}{x} < \frac{1/2-C T}{\sqrt{\lambda}}
$$
such that the following two conditions hold:
\begin{itemize}
\item[{\rm (i)}]
For all $M \in \bbM^2_0$, $f^1(.,M)$ is progressively measurable with 
$\int_0^T |f^1(t,0) | dt \in L^2({\cal F}_T)$, and 
$$
\norm{f^1(t,M)-f^1(t,M')}_2 \le C \norm{M -M'}_{\bbS^2}
$$
\item[{\rm (ii)}] 
For all $M \in \bbM^2_0$, $f^2(.,M)$ is progressively measurable with
$\int_0^T|f^2(t,M)|dt \in L^2({\cal F}_T),$ and $J(M) := \int_0^T f^2(t,M) dt$ 
defines a continuous map $J : \bbM^2_0 \to L^2({\cal F}_T)^d$ such that for all 
$M \in M^2_0$, $J(M)$ is $\omega$-Lipschitz with constant $\rho(\norm{M}_{\bbS^2})$.
\end{itemize}
\end{corollary}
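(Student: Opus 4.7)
The plan is to apply Theorem \ref{thm:FM} with the generators $F^1_t(M) := \int_0^t f^1(s,M)\,ds$ and $F^2_t(M) := \int_0^t f^2(s,M)\,ds$. The integrability assumptions in (i)--(ii) of the corollary, together with the Lipschitz bound on $f^1$ and the fact that $J(M) \in L^2({\cal F}_T)^d$, ensure by the same argument as in the proof of Proposition \ref{prop:genBSDE} that $F^1, F^2 : \bbM^2_0 \to \bbS^2_0$ are well defined. It then remains to verify conditions (i) and (ii) of Theorem \ref{thm:FM} with constant $CT$ in place of $C$.

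For condition (i) of Theorem \ref{thm:FM}, I would first use Minkowski's inequality to get
\[
\norm{F^1_T(M) - F^1_T(M')}_2 \le \int_0^T \norm{f^1(s,M) - f^1(s,M')}_2\,ds \le C T \norm{M - M'}_{\bbS^2}.
\]
Then the key observation is that conditional expectation against $\scF_0$ is an $L^2$-orthogonal projection, so for any $X \in L^2(\scF_T)^d$ one has $\norm{X - \bbE_0 X}_2^2 = \norm{X}_2^2 - \norm{\bbE_0 X}_2^2 \le \norm{X}_2^2$. Applied to $X = F^1_T(M) - F^1_T(M')$, this yields
\[
\norm{F^1_T(M) - \bbE_0 F^1_T(M) - (F^1_T(M') - \bbE_0 F^1_T(M'))}_2 \le C T \norm{M - M'}_{\bbS^2},
\]
and by assumption $CT < 1/2$, giving the required inequality for condition (i) of Theorem \ref{thm:FM}.

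For condition (ii), note that $F^2_T(M) = J(M)$, which is continuous in $M$ by assumption. Since $J(M)$ is $\omega$-Lipschitz with constant $\rho(\norm{M}_{\bbS^2})$, Proposition 10.11 of Da Prato (2006), as recalled in Section \ref{sec:compact}, gives $J(M) \in W^{1,2}(\Omega)^d$ with $\norm{DJ(M)}_2 \le \rho(\norm{M}_{\bbS^2})$. The growth condition on $\rho$ in the corollary is precisely the hypothesis \eqref{rhoC} of Theorem \ref{thm:FM} with the replacement $C \leftarrow CT$. Hence all hypotheses of Theorem \ref{thm:FM} are verified, and its conclusion delivers the desired solution $(Y,M) \in \bbS^2 \times \bbM^2_0$. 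There is no real obstacle here: the proof is a bookkeeping exercise reducing this concrete BSDE to the abstract BSE existence result, the only mild subtlety being the use of the $L^2$-contractivity of $X \mapsto X - \bbE_0 X$ to pass from a Lipschitz bound on $F^1_T$ to the corresponding bound on its centered version.
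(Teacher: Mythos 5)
Your proposal is correct and follows essentially the same route as the paper: define $F^i_t(M) = \int_0^t f^i(s,M)\,ds$, bound $\norm{F^1_T(M)-F^1_T(M')}_2$ by $CT\norm{M-M'}_{\bbS^2}$, identify $F^2_T(M)=J(M)$ and use the $\omega$-Lipschitz property to control $\norm{DF^2_T(M)}_2$, then invoke Theorem \ref{thm:FM} with $CT$ in place of $C$. Your explicit remark that $X\mapsto X-\bbE_0 X$ is an $L^2$-contraction just fills in a step the paper leaves implicit.
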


\begin{proof}
As in Corollary \ref{cor:YM}, it follows from the assumptions that 
$F^i_t(M) = \int_0^t f^i(s,M) ds$ is in $\bbS^2_0$ for $i=1,2$ and all $M \in \bbM^2_0$.
Moreover, 
$$
\bbE \abs{F^1_T(M) - F^1_T(M')}^2 \le C^2 T^2 \norm{M - M'}^2_{\bbS^2},
$$
and since $\int_0^T f^2(s,M)ds$ is $\omega$-Lipschitz with constant
$\rho(\norm{M}_{\bbS^2})$, one has 
$\norm{DF^2_T (M)}_2 \le \rho(\norm{M}_{\bbS^2})$. So the conditions of 
Theorem \ref{thm:FM} hold with $CT$ instead of $C$, and the corollary follows
as a consequence.
\end{proof}

\begin{Remark}
As a special case of Corollary \ref{corcor:Lip}, one obtains that the BSDE
$$
Y_t = \xi + \int_t^T f(s,M) ds + M_T - M_t
$$
has a solution for every terminal condition $\xi \in L^2({\cal F}_T)^d$ and driver 
$f$ satisfying condition (ii) of Corollary \ref{corcor:Lip}. This provides an existence 
result for multidimensional BSDEs with drivers exhibiting general dependence on the whole 
process $M$. In contrast to the BSDE results in Section \ref{sec:contr}, here the driver 
is not required to be Lipschitz in $M$. On the other hand, it is supposed to satisfy 
the $\omega$-Lipschitzness assumption contained in condition (ii) of Corollary \ref{corcor:Lip}. 
\end{Remark}

\subsection{Non-Lipschitz BSDEs based on a Brownian motion and a Poisson random measure}
\label{subsec:bsde}

We now focus on BSDEs with non-Lipschtiz coefficients that depend on an
$n$-dimensional Brownian motion $W$ and an independent Poisson random measure 
$N$ on $[0,T] \times E$, where $E = \bbR^m \setminus \crl{0}$,
with an intensity measure of the form $dt \mu(dx)$ for a measure $\mu$ over 
the Borel $\sigma$-algebra ${\cal B}(E)$ of $E$ satisfying
$$
\int_E (1 \wedge |x|^2) \mu(dx) < \infty
$$
(see Remark \ref{rem:BMPM} above for a construction of $W$ and $N$ 
in the case where $\bbP$ is a mean zero Gaussian measure on the
infinite-dimensional separable Hilbert space $\Omega$).

As in Subsection \ref{subsec:NLBSE}, we denote by ${\cal F}$ the completed Borel $\sigma$-algebra 
on $\Omega$ and let $\bbF =  ({\cal F}_t)_{0\le t\le T}$ be a filtration satisfying the usual conditions.
Let $\tilde{N}$ be the compensated random measure \linebreak $N(dt,dx) - dt \mu(dx)$, and assume
that, for $A \in {\cal B}(E)$ with $\mu(A) < \infty$, $\tilde{N}([0,t] \times A)$ and $W$ are martingales with respect to $\bbF$.
The next proposition gives an existence result for BSDEs with functional drivers of the form 
\be \label{bsde:ZU}
Y_t = \xi + \int_t^T f(s,Z^M_s,U^M_s) ds + M_T - M_t.
\ee

\begin{proposition} \label{prop:f1f2}
Let $\xi \in L^2({\cal F}_T)^d$ and assume the driver is of the form $f = f^1 + f^2$ for mappings 
$$
f^1, f^2 : [0,T] \times \Omega \times L^2({\cal F}_T)^{d \times n} \times 
L^2(\Omega \times E, {\cal F}_T \otimes {\cal B}(E), \bbP \otimes \mu)^d \to \bbR^d.
$$ 
Then the BSDE \eqref{bsde:ZU} has a solution $(Y,M) \in \bbS^2 \times \bbM^2_0$ 
if there exist a constant $C \ge 0$ and a nondecreasing function $\rho : \mathbb{R}_+ \to \mathbb{R}_+$ 
such that for all $M,M' \in \bbM^2_0$, the following two conditions hold:
\begin{itemize}
\item[{\rm (i)}]
$f^1(t,Z^M_t,U^M_t)$ is progressively measurable with
$\int_0^T \norm{f^1(t,0,0)}_2 dt < \infty$, and
$$
\norm{f^1(t,Z^M_t,U^M_t)-f^1(t,Z^{M'}_t,U^{M'}_t)}_2 \le 
C \brak{\norm{Z^M_t - Z^{M'}_t}_2 + \norm{U^M_t - U^{M'}_t}_{L^2(\bbP \otimes \mu)}}
$$
\item[{\rm (ii)}] 
$f^2(t,Z^M_t,U^M_t)$ is progressively measurable with
$\int_0^T \norm{f^2(t,0,0)}_2 dt < \infty$, and
\begin{align*}
&\norm{\int_0^T \abs{f^2(t,Z^M_t,U^M_t) - f^2(t,Z^{M'}_t,U^{M'}_t)} dt}_2 \\
&\;\;\leq\rho \brak{\norm{Z^M}_{\bbH^2} + \norm{Z^{M'}}_{\bbH^2} 
	+ \norm{U^M}_{L^2(\tilde{N})} + \norm{U^{M'}}_{L^2(\tilde{N})}} 
\brak{\norm{Z^M-Z^{M'}}_{\bbH^2} + \norm{U^M-U^{M'}}_{L^2(\tilde{N})}},
\end{align*}
and $f^2(t,Z^M_t,U^M_t)$ is $\omega$-Lipschitz with constant
$C \brak{1 + \norm{Z^M_t}_2 + \norm{U^M_t}_{L^2(\bbP \otimes \mu)}}$.
\end{itemize}
\end{proposition}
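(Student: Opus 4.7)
The plan is to apply Theorem \ref{thm:FM} on a short time window $[T-\delta,T]$ and then to iterate backwards in time exactly as in the proof of Proposition \ref{prop:anti}. Setting $F^i_t(M) := \int_0^t f^i(s,Z^M_s,U^M_s)\,ds$ for $i=1,2$, the integrability argument opening the proof of Proposition \ref{prop:anti} shows that $F^1, F^2 : \bbM^2_0 \to \bbS^2_0$ are well defined, so the BSDE becomes a BSE with $Y$-independent generator $F = F^1 + F^2$; condition (S) then holds trivially and Corollary \ref{cor:fix} reduces the problem to finding a fixed point of $G_0$.

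To check the hypotheses of Theorem \ref{thm:FM} on the window $[T-\delta,T]$, combine assumption (i) with Cauchy--Schwarz in $s$ and the isometry \eqref{isometry} to obtain
\[
\norm{F^1_T(M) - F^1_T(M')}_2 \le C\sqrt{2\delta}\,\norm{M-M'}_{\bbS^2}.
\]
Since projecting out $\bbE_0$ is a contraction in $L^2$, condition (i) of Theorem \ref{thm:FM} holds with constant $C\sqrt{2\delta}$. For the non-Lipschitz part, the $\omega$-Lipschitz bound of assumption (ii), evaluated pointwise for fixed $\omega,\omega'\in\Omega$, shows that $F^2_T(M)$ is $\omega$-Lipschitz with constant $C\int_{T-\delta}^T (1+\norm{Z^M_s}_2+\norm{U^M_s}_{L^2(\bbP\otimes\mu)})\,ds$; Cauchy--Schwarz and \eqref{isometry} bound this by $\rho_\delta(\norm{M}_{\bbS^2})$ with $\rho_\delta(x) = C\delta + C\sqrt{2\delta}\,x$. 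Proposition 10.11 of Da Prato (2006) then puts $F^2_T(M)$ in $W^{1,2}(\Omega)^d$ with $\norm{DF^2_T(M)}_2 \le \rho_\delta(\norm{M}_{\bbS^2})$, and the integral-form Lipschitz estimate in (ii) together with the isometry delivers the continuity of $F^2_T : \bbM^2_0 \to L^2(\scF_T)^d$. Choosing $\delta$ so small that $C\sqrt{2\delta}<1/2$ and $C\sqrt{2\delta}(\sqrt\lambda+1)<1/2$, Theorem \ref{thm:FM} produces a solution $(Y^{(k)},M^{(k)})\in\bbS^2\times\bbM^2_0$ of the BSDE with driver $f\cdot 1_{[T-\delta,T]}$.

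The remainder of the argument is the backward iteration from the end of the proof of Proposition \ref{prop:anti}: successively solve analogous BSDEs on shrinking windows, using the terminal value $Y^{(k-j+1)}_{T-j\delta}$ and driver $f\cdot 1_{[T-(j+1)\delta,T-j\delta]}$ at the $j$-th step, and then splice the resulting pieces by matching $M$-increments across the points $j\delta$ to obtain $(Y,M)\in\bbS^2\times\bbM^2_0$. The main obstacle I expect is the bookkeeping of the $\omega$-Lipschitz constant: one must confirm that $\rho_\delta$ truly has the correct sublinear form $a + bx$ with $b$ small enough to satisfy \eqref{rhoC} at every stage of the iteration, not just the first. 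Once this estimate is verified uniformly in the window, the splicing itself is structurally identical to Proposition \ref{prop:anti} and should present no additional difficulty.
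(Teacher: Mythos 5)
Your proposal is correct and follows essentially the same route as the paper: truncate the driver to a window $[T-\delta,T]$ with $\sqrt{2\delta}\,C(1+\sqrt{\lambda})<1/2$, verify the hypotheses of Theorem \ref{thm:FM} with Lipschitz constant $\sqrt{2\delta}\,C$ and $\rho_\delta(x)=\delta C+\sqrt{2\delta}\,Cx$ via Cauchy--Schwarz and the isometry \eqref{isometry}, then iterate backward and splice as in Proposition \ref{prop:anti}. The uniformity concern you raise is harmless, since conditions (i) and (ii) hold with the same $C$ and $\rho$ on every window, so the same $\delta$ works at each stage.
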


\begin{proof}
Choose $\delta > 0$ so that 
$$
\sqrt{2 \delta} C \brak{1 + \sqrt{\lambda}} < \frac{1}{2} \quad \mbox{and} \quad k := T/\delta \in \bbN.
$$
Set $F^i_t(M) = \int_0^t f^i(s,Z^M_s,U^M_s) 1_{[T-\delta,T]}(s) ds$. It follows from the assumptions that
$F^i(M) \in \bbS^2_0$ for $i=1,2$ and all $M \in \bbM^2_0$. Moreover,
\beas
&& \norm{F^1_T(M) - \bbE_0 F^1_T(M) - (F^1_T(M') - \bbE_0 F^1_T(M'))}_2^2
\le \norm{F^1_T(M) - F^1_T(M')}_2^2\\
&& \le 2 \delta C^2 \int_{T-\delta}^T \brak{ \norm{Z^M_s - Z^{M'}_s}_2^2 
+ \norm{U^M_s -U^{M'}_s}^2_{L^2(\bbP \otimes \mu)}} ds 
\le 2 \delta C^2 \norm{M - M'}^2_{\bbS^2}.
\eeas
From condition (ii) one obtains that $M \in \mathbb{M}^2_0 \mapsto F^2_T(M) \in L^2({\cal F}_T)^d$ 
is continuous, and
\beas
&&
\abs{\int_{T-\delta}^T f^2(s,Z^M_s,U^M_s)(\omega) - f^2(s,Z^M_s,U^M_s)(\omega') ds}
\le \int_{T-\delta}^T \abs{f^2(s,Z^M_s,U^M_s)(\omega) - f^2(s,Z^M_s,U^M_s)(\omega')} ds\\
&& \le C \brak{\int_{T-\delta}^T \brak{1+\norm{Z^M_s}_2 + \norm{U^M_s}_{L^2(\bbP \otimes \mu)}} ds}
\norm{\omega - \omega'}\\
&& \le \brak{\delta C + \sqrt{\delta} C \sqrt{\int_{T-\delta}^T 2 \brak{\norm{Z^M_s}^2_2 
+ \norm{U^M_s}^2_{L^2(\bbP \otimes \mu)}} ds}} \norm{\omega - \omega'}\\
&&\le \brak{\delta C + \sqrt{2 \delta} C \norm{M}_{\bbS^2}} \norm{\omega - \omega'}.
\eeas
It follows that for all $M \in \mathbb{M}^2_0$, $F^2_T(M)$ is in $W^{1,2}(\Omega)^d$ with
$\norm{D F^2_T(M)}_2 \le \delta C + \sqrt{2 \delta} C \norm{M}_{\bbS^2}$.
So the conditions of Theorem \ref{thm:FM} hold with $\sqrt{2 \delta}C$ instead of $C$ and 
$\rho(x) = \delta C + \sqrt{2 \delta} Cx$. Therefore,
$$
Y_t = \xi + \int_t^T (f^1(s,Z^M_s,U^M_s) + f^2(s,Z^M_s,U^M_s)) 1_{[T-\delta,T]}(s) ds + M_T-M_t
$$
has a solution $(Y^{(k)},M^{(k)}) \in \bbS^2 \times \bbM^2_0$. From the same argument one obtains that, for $t \le T-\delta$, 
$$
Y_t = Y^{(k)}_{T-\delta} + \int_t^{T-\delta} (f^1(s,Z_s) + f^2(s,Z_s)) 1_{[T- 2 \delta, T-\delta]}(s) ds 
+ M_{T-\delta} - M_t$$
has a solution $(Y^{(k-1)},Z^{(k-1)}) \in \bbS^2 \times \bbM^2_0$.
Iterating this procedure, one obtains $(Y^{(j)},Z^{(j)})$, $j =1 , \dots, k$.
Now, define 
\begin{align*}
Y_t&:= Y^{(1)}_t,\quad M_t:= M^{(1)}_t \quad\text{for}\quad 0 \le t \le
\delta \quad\text{and}\quad\\
Y_t &:= Y^{(j)}_t,\quad
M_t - M_{(j-1) \delta}:= M^{(j)}_t -
M^{(j)}_{(j-1) \delta}
\end{align*}
for $(j-1) \delta < t \le j \delta$, $j =2, \dots, k$. Then $(Z^M_t, U^M_t) = (Z^{M^{(j)}}_t, U^{M^{(j)}}_t)$
for $(j-1) \delta < t \le j \delta$. So $(Y,M)$ is a solution of
\eqref{bsde:ZU} in $\bbS^2 \times \bbM^2_0$.
\end{proof}

As a consequence of Proposition \ref{prop:f1f2}, one obtains the following existence result for 
multidimensional mean-field BSDEs with drivers of quadratic growth and
square integrable terminal conditions. While there exist 
general existence and uniqueness results for one-dimensional BSDEs with 
drivers of quadratic growth (see e.g., Kobylanski, 2000, Briand and Hu, 2006, 2008,
or Delbaen et al., 2011), multidimensional quadratic BSDEs do not always admit solutions
(see Peng, 1999, or Frei and dos Reis, 2011). An existence and uniqueness result for 
multidimensional BSDEs with general drivers of quadratic growth was given by Tevzadze (2008). 
But it only holds for terminal conditions with small $L^{\infty}$-norm. Other results, such as the ones in Cheridito and Nam (2015), require the driver to have special structure.

\begin{corollary} \label{cor:f1f2}
Let $\xi \in L^2({\cal F}_T)^d$ and assume the driver is of the form 
$$
f(t,Z_t,U_t) = \tilde{\bbE} a(t,Z_t,\tilde{Z}_t,U_t,\tilde{U}_t) + B(t,\bbE b(t,Z_t,U_t))
$$
for mappings 
$a : [0,T] \times \Omega \times (\bbR^{d\times n})^2 \times (L^2(\mu))^2 \to \bbR^d$,
$b : [0,T] \times \Omega \times\bbR^{d\times n} \times L^2(\mu) \to\bbR^l$ and
$B : [0,T] \times \Omega \times\bbR^l \to\bbR^d$,
where $(\tilde{Z}_t,\tilde{U}_t)$ is a copy of $(Z_t,U_t)$ living on a separate 
probability space $(\tilde{\Omega}, \tilde{{\cal F}}, \tilde{\bbP})$, and
$\tilde{\bbE} a(t,Z_t,\tilde{Z}_t,U_t,\tilde{U}_t)$ means 
$\int_{\tilde{\Omega}} a(t,Z_t,\tilde{Z}_t,U_t,\tilde{U}_t) d\tilde{\bbP}$.

Then the BSDE \eqref{bsde:ZU} has a solution $(Y,M) \in \bbS^2 \times \bbM_0^2$
if there exists a constant $C \ge 0$ such that for all $z,\tilde{z}, z', \tilde{z}' \in \mathbb{R}^{d \times n}$, 
$u,\tilde{u},u' , \tilde{u}' \in L^2(\mu)$ and $x,x' \in \bbR^k$, 
$a(.,z,\tilde{z},u,\tilde{u})$, $b(.,z,u)$ and $B(.,x)$ are progressively measurable and 
the following hold:
\begin{itemize}
\item[{\rm (i)}]
$a(.,0,0,0,0) \in\bbH^2$ and 
$$
|a(t,z,\tilde{z},u,\tilde{u}) - a(t,z',\tilde{z}',u',\tilde{u}')|
\le C \brak{|z-z'| + |\tilde{z} - \tilde{z}'| + \norm{u-u'}_{L^2(\mu)} + \norm{\tilde{u} - \tilde{u}'}_{L^2(\mu)}}
$$
\item[{\rm (ii)}]
$|b(t,0,0)|, |B(t,0)| \le C$ and
at least one of the following two conditions is satisfied:
\begin{enumerate}
	\item[{(a)}] For any given $t\in[0,T],x,x'\in\bbR^l, z,z'\in\mathbb{R}^{d\times n},$ and $u,u'\in L^2(\mu)$, $B(t,x)$ is
	$\omega$-Lipschitz with constant $C(1+ \sqrt{|x|})$, and
	\begin{align*}
	&\abs{b(t,z,u)-b\bigl(t,z',u'\bigr)}\le C \bigl( 1+|z| + \abs{z'}+	 \norm{u}_{L^2(\mu)} + \norm{u'}_{L^2(\mu )} \bigr)\bigl( \bigl| z-z'\bigr| + \norm{u-u'}_{L^2(\mu)} \bigr),
	\\
	&|B(t,x)-B(t,x')| \le C | x-x'|.
	\end{align*}
	\item[{(b)}] For any given $t\in[0,T],x,x'\in\bbR^l, z,z'\in\mathbb{R}^{d\times n},$ and $u,u'\in L^2(\mu)$, $B(t,x)$ is
	$\omega$-Lipschitz with constant $C(1+ |  x|  )$, and
	\begin{align*}
	&| b(t,z,u)-b(t,z',u')| \leq C \bigl( \bigl| z-z'\bigr| + \norm{u-u'}_{L^2(\mu )} \bigr)\hspace{2.5in},
	\\
	&| B(t,x)-B(t,x')| \leq C(1+| x| +| x'| )| x-x'| .
	\end{align*}
\end{enumerate}
%\beas
%{\rm a)} && |b(t,z,u)-b(t,z',u')| \le C \brak{1+|z|+ |z'| + \norm{u}_{L^2(\mu)} + \norm{u'}_{L^2(\mu)}}
%\brak{|z-z'| + \norm{u-u'}_{L^2(\mu)}},\\
%&& |B(t,x)-B(t,x')| \le C |x-x'|,\\
%&& \mbox{and for given $(t,x) \in[0,T] \times \bbR^l$, $B(t,x)$ is
%$\omega$-Lipschitz with constant $C(1+ \sqrt{|x|})$.}\\
%{\rm b)} && |b(t,z,u)-b(t,z',u')| \leq C \brak{|z-z'| + \norm{u-u'}_{L^2(\mu)}},\\
%&& |B(t,x)-B(t,x')| \leq C (1+|x|+|x'|)|x-x'|,\\
%&& \mbox{and for given $(t,x) \in[0,T] \times \bbR^l$, $B(t,x)$ is
%$\omega$-Lipschitz with constant $C(1+ |x|)$.}
%\eeas
\end{itemize}
\end{corollary}
\begin{proof}
It is enough to show that 
$$
f^1(t,Z_t,U_t) := \tilde{\bbE} a(t,Z_t,\tilde{Z}_t,U_t, \tilde{U}_t) \quad \mbox{and} \quad
f^2(t,Z_t,U_t) := B(t,\bbE b(t,Z_t,U_t)) 
$$
satisfy the conditions of Proposition \ref{prop:f1f2}. As in the proof of Corollary 
\ref{cor:MKV}, one can deduce from Lemma 4.51 of Aliprantis and Border (2006)
that $f^i(t,Z_t,U_t)$ is progressively measurable and satisfies
$\int_0^T \norm{f^i(t,0,0)}_2 dt < \infty$ for $i=1,2$ and all 
$Z \in \bbH^2$ and $U \in L^2(\tilde{N})$.  

Now consider $Z,Z' \in \bbH^2$,
$U,U' \in L^2(\tilde{N})$, and let $(\tilde{Z}, \tilde{U},\tilde{Z}',\tilde{U}')$ be a copy of 
$(Z,U,Z',U')$ on $\tilde{\Omega}$. Then, for fixed $t \in [0,T]$,
\begin{align*}
&\bbE|\tilde{\bbE} a(t,Z_t,\tilde{Z}_t,U_t,\tilde{U}_t) - \tilde{\bbE} 
a(t,Z'_t,\tilde{Z}'_t,U'_t,\tilde{U}'_t)|^2 \le \bbE \tilde{\bbE} 
|a(t,Z_t,\tilde{Z}_t,U_t,\tilde{U}_t) - a(t, Z'_t,\tilde{Z}'_t,U'_t,\tilde{U}'_t)|^2\\
& \le 4 C^2 \brak{\bbE |Z_t - Z'_t|^2 + \tilde{\bbE}  |\tilde{Z}_t - \tilde{Z}'_t|^2
+ \bbE \norm{U_t - U'_t}^2_{L^2(\mu)} + \tilde{\bbE} \norm{\tilde{U}_t - \tilde{U}'_t}^2_{L^2(\mu)}}\\
&= 8 C^2 \brak{\norm{Z_t-Z'_t}^2_2 + \norm{U_t - U'_t}^2_{L^2(\bbP \otimes \mu)}}.
\end{align*}
On the other hand, if condition (ii.a) holds, then
\begin{align*}
& \norm{\int_0^T\abs{B(t,\bbE b(t,Z_t,U_t))- B(t,\bbE b(t,Z'_t,U'_t))}dt}_2 \\
&\leq C \int_0^T\abs{\bbE b(t,Z_t,U_t)-\bbE b(t,Z'_t,U'_t)}dt
\le C \bbE \int_0^T \abs{b(t,Z_t,U_t)- b(t,Z'_t,U'_t)} dt\\
\leq& C^2\bbE \int_0^T \brak{1+|Z_t|+|Z'_t| + \norm{U_t}_{L^2(\mu)} + \norm{U'_t}_{L^2(\mu)}}
\brak{|Z_t -Z'_t| + \norm{U_t - U'_t}_{L^2(\mu)}} dt\\
\le & C^2\sqrt{\bbE \int_0^T \brak{1+|Z_t|+|Z'_t| + \norm{U_t}_{L^2(\mu)}
+ \norm{U'_t}_{L^2(\mu)}}^2 dt} 
\sqrt{\bbE \int_0^T \brak{\abs{Z_t-Z'_t} + \norm{U_t - U'_t}_{L^2(\mu)}}^2 dt}\\
\leq & C^2 \sqrt{10} \, \sqrt{T+ \norm{Z}^2_{\bbH^2} + \norm{Z'}^2_{\bbH^2}
+ \norm{U}^2_{L^2(\tilde{N})} + \norm{U'}^2_{L^2(\tilde{N})}}
\sqrt{\norm{Z-Z'}^2_{\bbH^2} + \norm{U-U'}^2_{L^2(\tilde{N})}}\\
\le & C^2 \sqrt{10} \, \brak{\sqrt{T}+ \norm{Z}_{\bbH^2} + \norm{Z'}_{\bbH^2}
+ \norm{U}_{L^2(\tilde{N})} + \norm{U'}_{L^2(\tilde{N})}} 
\brak{\norm{Z-Z'}_{\bbH^2} + \norm{U-U'}_{L^2(\tilde{N})}}.
\end{align*}
Moreover, $B(t,\bbE b(t,Z_t,U_t))$ is $\omega$-Lipschitz with
constant $C(1 + \sqrt{|\bbE b(t,Z_t,U_t)|})$, and
\beas
&& |\bbE b(t,Z_t,U_t)| \le \bbE |b(t,Z_t,U_t)|\\
&\le& C \bbE \brak{1+|Z_t| + \norm{U_t}_{L^2(\mu)}} \brak{|Z_t| + \norm{U_t}_{L^2(\mu)}}\\
&\le& C\brak{1 + \norm{Z_t}_2 + \norm{U_t}_{L^2(\bbP \otimes \mu)}}
\brak{\norm{Z_t}_2 + \norm{U_t}_{L^2(\bbP \otimes \mu)}},
\eeas
from which one obtains that $B(t,\bbE b(t,Z_t,U_t))$ is $\omega$-Lipschitz with
constant $$
C(1 + \sqrt{C} (1+ \norm{Z_t}_2 + \norm{U_t}_{L^2(\bbP \otimes \mu)})).$$

Similarly, if condition (ii.b) holds, one has
\begin{align*}
&|B(t,\bbE b(t,Z_t,U_t))-B(t,\bbE b(t,Z'_t,U'_t)) |\\
&\leq C\brak{1 + \abs{\bbE b(t,Z_t,U_t)} + \abs{\bbE b(t,Z'_t,U'_t)}}
\abs{\bbE b(t,Z_t,U_t) - \bbE b(t,Z'_t,U'_t)}\\
&\leq C \brak{1 + \bbE \abs{b(t,Z_t,U_t)} +\bbE \abs{b(t,Z'_t,U'_t)}}
\bbE \abs{b(t,Z_t,U_t) - b(t,Z'_t,U'_t)}\\
&\leq C^2 \brak{1 + 2 \bbE |b(t,0,0)| + C \bbE \brak{|Z_t| + |Z'_t| 
+ \norm{U_t}_{L^2(\mu)} + \norm{U'_t}_{L^2(\mu)}}} \bbE \brak{|Z_t-Z'_t| + \norm{U_t-U'_t}_{L^2(\mu)}}. 
\end{align*}
Hence,
\begin{align*}
& \norm{\int_0^T\abs{B(t,\bbE b(t,Z_t,U_t))- B(t,\bbE b(t,Z'_t,U'_t))}dt}_2\\
% \le& C \int_0^T \brak{1 + \abs{\bbE b(s,Z_t,U_t)} + \abs{\bbE b(s,Z'_t,U'_t)}}
% \abs{\bbE b(s,Z_t,U_t) - \bbE b(s,Z'_t,U'_t)}ds\\
% \le& C \int_0^T \brak{1 + \bbE \abs{b(s,Z_t,U_t)} +\bbE \abs{b(s,Z'_t,U'_t)}}\bbE \abs{b(s,Z_t,U_t) - b(s,Z'_t,U'_t)}ds\\
% \le& C \int_0^T \brak{1 + 2 \bbE |b(s,0,0)| + C \bbE \brak{|Z_t| + |Z'_t| 
% + \norm{U_t}_{L^2(\mu)} + \norm{U'_t}_{L^2(\mu)}}}\\
% & \qquad \qquad C \bbE \brak{|Z_t-Z'_t| + \norm{U_t-U'_t}_{L^2(\mu)}} ds\\
\le& C^2 \sqrt{\int_0^T \brak{1+ 2C + C \bbE \brak{|Z_t| + |Z'_t| 
+ \norm{U_t}_{L^2(\mu)} + \norm{U'_t}_{L^2(\mu)}}}^2 dt}\\
& \qquad \qquad \times\sqrt{\int_0^T \brak{\bbE  |Z_t-Z'_t| + \bbE \norm{U_t-U'_t}_{L^2(\mu)}}^2 dt}\\
\le & C^3 \sqrt{\int_0^T 6 \brak{ C^{-2} + 4 + \norm{Z_t}_2^2 + \norm{Z'_t}_2^2 
+ \norm{U_t}^2_{L^2(\bbP \otimes \mu)} + \norm{U'_t}^2_{L^2(\bbP \otimes \mu)}} dt}\\
& \qquad \qquad \times\sqrt{\int_0^T 2 \brak{\norm{Z_t-Z'_t}^2_2 + \norm{U_t-U'_t}^2_{L^2(\bbP \otimes \mu)}} dt}\\
\le& C^3 \sqrt{12}
\brak{\sqrt{T( C^{-2} +4)} + \norm{Z}_{\bbH^2} + \norm{Z'}_{\bbH^2}
+ \norm{U}_{L^2(\tilde{N})}+ \norm{U'}_{L^2(\tilde{N})}}
\brak{\norm{Z-Z'}_{\bbH^2} + \norm{U-U'}_{L^2(\tilde{N})}}.
\end{align*}
Moreover, $B(t,\bbE b(s,Z_t,U_t))$ is $\omega$-Lipschitz with
constant $C(1 + |\bbE b(t,Z_t,U_t)|)$. So since 
\beas
|\bbE b(t,Z_t,U_t)| \le  \bbE |b(t,Z_t,U_t)|
\le C \brak{1 + \bbE \brak{|Z_t| + \norm{U_t}_{L^2(\mu)}}}\le C \brak{1 + \norm{Z_t}_2 + \norm{U_t}_{\bbP \otimes L^2(\mu)}},
\eeas
$B(t,\bbE b(t,Z_t,U_t))$ is $\omega$-Lipschitz with constant
$C \brak{1 + C  \brak{1 + \norm{Z_t}_2 + \norm{U_t}_{\bbP \otimes L^2(\mu)}}}$.
This shows that the conditions of Proposition \ref{prop:f1f2} hold, and the 
corollary follows.
\end{proof}

\begin{Example} \label{ex:multiquad} 
A simple example of a driver satisfying the conditions of Corollary \ref{cor:f1f2}
is given by 
$$
f(Z_t) = f^1(Z_t) + f^2(Z_t),
$$
for a Lipschitz function $f^1 : \mathbb{R}^{d \times n} \to \mathbb{R}^d$ and
a mapping $f^{2}:L^{2}(\scF_{T})^{d \times n} \to \bbR^{d}$ of the form
$$
f^{2}(Z_t):= \alpha + \bbE\brak{ Z_{t}|Z_t|}\beta
$$
with constant vectors $\alpha\in \mathbb{R}^{d\times 1}$ and $\beta\in\bbR^{n\times 1}$. In particular, if $W$ is an 
$n$-dimensional Brownian motion generating the filtration $\bbF$, the BSDE
$$
Y_t =\xi+\int_t^T f(Z_s) ds + \int_{t}^{T} Z_s dW_s
$$
has a solution $(Y,Z) \in \bbS^2 \times \bbH^2$ for every terminal condition $\xi \in L^2({\cal F}_T)^d$. 

Since $f^2$ has quadratic growth, the contraction mapping principle used by
Buckdahn et al. (2009) cannot be applied here. Also, if $d>1$ and $f^2$ were a function with quadratic 
growth of the realizations $Z_t(\omega)$, the existence of a global solution could not be guaranteed; 
see Frei and dos Reis (2011) for a counterexample.
\end{Example}

\end{document}